\newcommand\rank{\operatorname{rank}}
\newcommand\R{{\mathbb{R}}}
\renewcommand\P{{\mathbf{P}}}
\newcommand\E{{\mathbf{E}}}
\newcommand\Var{\mathbf{Var}}
\newcommand\Cor{\mathbf{Cor}}
\def\N{\mathbb N}
\def\Inf{\text{Inf}}
\def\rank{\text{rank}}
\theoremstyle{plain}
 \newtheorem{theorem}{Theorem}[section]
 \newtheorem{proposition}[theorem]{Proposition}
 \newtheorem{lemma}[theorem]{Lemma}
 \newtheorem{corollary}[theorem]{Corollary}
 \newtheorem{claim}[theorem]{Claim}
\theoremstyle{definition}
\newtheorem{definition}[theorem]{Definition}
\begin{document}

\title[Anti-concentration for polynomials]{Anti-concentration for polynomials of  independent random variables }

\author{Raghu Meka}
\address{Department of Computer Science,
University of California, Los Angeles}
\email{raghum@cs.ucla.edu}

\author{Oanh Nguyen}
\address{Department of Mathematics,  Yale University, New Haven CT 06520, USA} 
\email{oanh.nguyen@yale.edu}

\author{Van Vu}
\address{Department of Mathematics, Yale University, New Haven CT 06520, USA}
\email{van.vu@yale.edu}
\thanks{V. Vu is supported by   NSF  grant DMS-1307797  and AFORS grant FA9550-12-1-0083.}

\begin{abstract}
We prove anti-concentration results for  polynomials of  independent random variables with arbitrary degree. Our results extend the classical Littlewood-Offord result for linear polynomials, and improve several  earlier estimates.

 We discuss applications in two different areas. In complexity theory,  we prove near optimal lower bounds for computing the Parity, addressing a challenge in complexity theory posed by Razborov and Viola, 
and also address a problem concerning OR functions.  In random graph theory, we  derive a general  anti-concentration result on the number of copies of a fixed graph  in a random graph.
\end{abstract}

\maketitle
\section{Introduction} 

Let $\xi$ be a Rademacher random variable (taking value $\pm 1$ with probability $1/2$)
 and  $A=\{a_1,\dots,a_n\}$ be a multi-set in $\R$ (here $n \rightarrow \infty$). Consider the  random sum 

$$S := a_1 \xi_1 + \dots + a_n \xi_n $$ where $\xi_i$ are iid copies of $\xi$.

 In 1943, Littlewood and Offord, in connection with their studies of 
random polynomials \cite{littlewood1943number}, raised the problem of estimating $\P(S \in I)$  for {\it arbitrary} coefficients $a_i$. 
They proved the following remarkable theorem:

\begin{theorem} \label{theorem:LO}   There is a constant $B$ such that the following holds for all $n$. If all coefficients 
$a_i$ have absolute value at least 1, then for any open  interval $I$ of length 1,

$$\P (S  \in I)  \le B n^{-1/2} \log n . $$ \end{theorem}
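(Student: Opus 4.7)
My plan is to use the elegant combinatorial argument of Erd\H{o}s, which proves a strictly stronger version of Theorem~\ref{theorem:LO} (with the $\log n$ factor removed). First, I would observe that replacing $\xi_i$ by $-\xi_i$ whenever $a_i < 0$ leaves the distribution of $S$ unchanged, so without loss of generality I may assume $a_i \geq 1$ for every $i$.

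Next, I would re-parameterize the $2^n$ equally likely outcomes by subsets: identify $(\xi_1, \ldots, \xi_n) \in \{-1, +1\}^n$ with $T = \{i : \xi_i = +1\} \subseteq [n]$, and set
\[ s_T = \sum_{i \in T} a_i - \sum_{i \notin T} a_i. \]
Then $\P(S \in I) = 2^{-n} |\mathcal{T}|$ where $\mathcal{T} := \{T \subseteq [n] : s_T \in I\}$.

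The heart of the proof is the claim that $\mathcal{T}$ is an antichain under inclusion. Indeed, suppose $T_1 \subsetneq T_2$; then
\[ s_{T_2} - s_{T_1} = 2 \sum_{i \in T_2 \setminus T_1} a_i \geq 2 |T_2 \setminus T_1| \geq 2, \]
so $s_{T_1}$ and $s_{T_2}$ cannot both lie in an open interval of length $1$. Sperner's theorem then yields $|\mathcal{T}| \leq \binom{n}{\lfloor n/2 \rfloor}$, and Stirling gives $\P(S \in I) \leq 2^{-n} \binom{n}{\lfloor n/2 \rfloor} = O(n^{-1/2})$, which is even sharper than the stated bound.

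There is no serious obstacle for this classical statement via the Sperner route; the hypothesis $|a_i| \geq 1$ is used precisely once, in the antichain step, via the sign-reduction followed by $|T_2 \setminus T_1| \geq 1$. If instead one insisted on reproducing the original Fourier-analytic proof of Littlewood and Offord, the main technical challenge would be to control $\int |\prod_j \cos(t a_j)|\,dt$ uniformly over all admissible coefficients, and it is exactly this step that produced the spurious $\log n$ factor in the 1943 paper.
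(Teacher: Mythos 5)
Your proof is correct and in fact establishes the sharper Erd\H{o}s bound $\P(S \in I) = O(n^{-1/2})$, which of course implies the stated bound with the extra $\log n$. The paper does not supply a proof of this theorem at all: it cites Littlewood--Offord for the $n^{-1/2}\log n$ bound and then immediately notes that Erd\H{o}s removed the logarithm via ``an elegant combinatorial proof,'' which is precisely the Sperner-theorem argument you reproduce. Your sign-reduction step, the re-indexing by $T = \{i : \xi_i = +1\}$, the antichain verification using $a_i \ge 1$, and the final application of Sperner plus Stirling are all sound. One small matter of emphasis: the antichain property holds for \emph{any} interval of length strictly less than $2$, not just length $1$, so the same argument gives the $O(n^{-1/2})$ bound for $|I| < 2$; using the hypothesis only through ``$|T_2\setminus T_1| \ge 1$'' is exactly where that slack appears. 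Your closing remark correctly identifies why the original 1943 argument carried a spurious $\log n$.
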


Shortly after the Littlewood-Offord result, Erd\H{o}s \cite{erdos1945lemma}  removed the $\log n$ term to obtain the optimal bound using an elegant combinatorial proof. 
Littlewood-Offord type results are commonly referred to as anti-concentration (or small-ball) inequalities.  Anti-concentration results have been developed by 
many researchers through decades, and have recently found important applications in the theories of random matrices and random polynomials;
see, for instance,  \cite{nguyen2013small} for  a survey. 

The goal of this paper is to extend Theorem \ref{theorem:LO} to higher degree polynomials. Consider 

\begin{equation}
P (x_1, \dots, x_n)  := \sum_{ S \subset \{1, \dots, n\}; |S| \le d } a_S \prod_{j \in S} x_j. \label{form}
\end{equation}

The first result in this direction, due to Costello, Tao, and the third author,  \cite{costello2006random}, is

\begin{theorem} \label{theorem:LOpoly0}  There is a  constant $B$ such that the following holds for all $d,n$.
 If there are $m n^{d-1} $  coefficients  $a_S$  with  absolute value at least 1, then for any open  interval $I$ of length 1,

$$\P (P(\xi_1, \dots, \xi_n)  \in I) \le B m^{- \frac{1} {2^ {(d^2+d)/2 } } } . $$ \end{theorem}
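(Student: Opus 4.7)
The plan is to argue by induction on the degree $d$. The base case $d=1$ is precisely Theorem \ref{theorem:LO}: a linear polynomial with $m n^{0} = m$ coefficients of absolute value at least $1$ satisfies $\P(P\in I)\leq B m^{-1/2}$, matching $1/2^{1(1+1)/2}$.

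For the inductive step, I would suppose the bound at degree $d-1$ and consider a degree-$d$ polynomial $P$ with at least $L = m n^{d-1}$ coefficients $a_S$ of absolute value at least $1$. An averaging argument first produces a good variable: since
$$\sum_{i=1}^n \#\{S\ni i : |a_S|\geq 1\} = \sum_{S}|S|\,[|a_S|\geq 1] \;\geq\; L,$$
there is an index $i$ with $\#\{S\ni i : |a_S|\geq 1\}\geq L/n = m n^{d-2}$. The discrete partial derivative $\partial_i P := \sum_{S\ni i}a_S\prod_{j\in S\setminus\{i\}}\xi_j$ is then a multilinear polynomial of degree $d-1$ in the variables $\xi_{-i}$ with at least $m(n-1)^{d-2}$ large coefficients (up to constants absorbed into $B$), so the inductive hypothesis gives
$$\sup_J \P(\partial_i P\in J)\;\leq\; B\,m^{-1/2^{d(d-1)/2}}.$$

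The heart of the proof is a decoupling inequality relating $\P(P\in I)$ to $\P(\partial_i P\in J)$. Writing $P = \xi_i\partial_i P + R_i$ with $R_i$ free of $\xi_i$ and applying Cauchy-Schwarz conditionally on $\xi_{-i}$ after introducing an independent copy $\xi_i'$ of $\xi_i$ yields the naive bound
$$\P(P\in I)^2 \;\leq\; \P\bigl((\xi_i-\xi_i')\partial_i P\in I-I\bigr)\;=\;\tfrac12+\tfrac12\,\P\bigl(|\partial_i P|<\tfrac12\bigr),$$
whose additive $1/2$, coming from the diagonal event $\xi_i=\xi_i'$, prevents it from pushing $\P(P\in I)$ below $1/\sqrt{2}$. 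To bypass this obstruction I would iterate the squaring over $d$ distinct indices $i_1=i,i_2,\dots,i_d$, producing the standard decoupling inequality
$$\P(P\in I)^{2^d} \;\leq\; \P\bigl(P^{(\sigma)}\in I\text{ for every }\sigma\in\{0,1\}^d\bigr),$$
where $P^{(\sigma)}$ denotes the copy of $P$ in which $\xi_{i_k}$ has been replaced by an independent copy $\xi_{i_k}'$ whenever $\sigma_k=1$. A careful unfolding of this $2^d$-fold intersection, by conditioning on the diagonal events $\{\xi_{i_k}=\xi_{i_k}'\}$ one variable at a time, ultimately bounds the right-hand side by a constant multiple of $\sup_J\P(\partial_{i_1}P\in J)$. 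Combining with the inductive bound and extracting the $2^d$-th root gives
$$\P(P\in I)\;\leq\; (CB)^{1/2^d}\,m^{-1/2^{d(d-1)/2+d}}\;=\;(CB)^{1/2^d}\,m^{-1/2^{d(d+1)/2}},$$
which is the claimed exponent, since $d(d-1)/2+d=d(d+1)/2$.

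The main obstacle is the bookkeeping of the $d$ nested diagonal contributions. Each squaring step creates an event $\{\xi_{i_k}=\xi_{i_k}'\}$ of probability $1/2$ that yields no useful constraint on $\partial_{i_k}P$, and it is not a priori clear that these losses compound only multiplicatively rather than obliterating the gain from the inductive hypothesis. Verifying that raising to the $2^d$-th power is indeed enough to absorb them is what forces the exponent to be the triangular-number expression $1/2^{d(d+1)/2}$ rather than the naive $1/2^d$. The improvements in the remainder of the present paper will consist in replacing this crude handling of the diagonal events by a sharper symmetrization that no longer incurs the full $1/2^{d(d+1)/2}$ penalty.
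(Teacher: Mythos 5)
The paper never proves Theorem~\ref{theorem:LOpoly0}; it is imported without proof from Costello, Tao and Vu \cite{costello2006random}, so your proposal is a reconstruction from scratch. It contains a genuine gap at the decisive step.

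The problem is the claim that conditioning on the diagonal events one variable at a time ``ultimately bounds the right-hand side by a constant multiple of $\sup_J \P(\partial_{i_1}P \in J)$.'' This cannot be true. Decompose $\P\bigl(\bigcap_{\sigma}\{P^{(\sigma)}\in I\}\bigr)$ according to which of the $d$ events $\{\xi_{i_k}=\xi_{i_k}'\}$ occur. The all-diagonal cell has probability $2^{-d}$, and on it every one of the $2^d$ copies $P^{(\sigma)}$ coincides with $P$, so the constraint collapses to $\{P\in I\}$. Hence the right-hand side is at least $2^{-d}\P(P\in I)$, a quantity that is \emph{not} controlled by any derivative of $P$ but by $P$ itself. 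The best your iteration can possibly yield is therefore a recursion of the form $p^{2^d}\le 2^{-d}p + U$, where $p=\P(P\in I)$ and $U$ gathers the off-diagonal cells. Even taking $U$ as small as you like, this inequality only forces $p\le 2^{-d/(2^{d}-1)}+O(U)$: a constant which, worse still, tends to $1$ as $d\to\infty$. It never produces a bound that decays in $m$. The ``main obstacle'' you flag in your final paragraph --- whether the diagonal losses compound only multiplicatively --- is not a bookkeeping nuisance to be ``verified''; they do not compound multiplicatively, they place a hard floor, and the floor defeats the argument. You can already see this for $d=1$ in your own display $\P(P\in I)^2\le \tfrac12+\tfrac12\,\P(|\partial_i P|<\tfrac12)$: iterating it for $d$ individual indices does not remove the additive $1/2$, it merely distributes $d$ of them.

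What Costello, Tao and Vu actually do is decouple on a \emph{set of $\Theta(n)$ variables at once} (roughly a random half $T\subset[n]$), not on $d$ individual coordinates. Then the analogue of the diagonal event, $\{\xi_i=\xi_i'\text{ for all }i\in T\}$, has probability $2^{-|T|}=2^{-\Theta(n)}$, which is harmless. The difference $P(\xi_T,\xi_U)-P(\xi_T',\xi_U)$ is then a polynomial in $\xi_U$ of degree at most $d-1$ whose coefficients are themselves random (linear in the increments $\xi_i-\xi_i'$, $i\in T$); a second-moment argument shows that with noticeable probability enough of these coefficients are $\ge 1$ in absolute value, after which one can invoke the inductive hypothesis at degree $d-1$. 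It is exactly the probability losses in this coefficient-regeneration step, compounded over the $d$ levels of induction, that produce the exponent $1/2^{(d^2+d)/2}$ instead of the naive $1/2^d$. Your degree bookkeeping and the identity $d(d-1)/2+d=d(d+1)/2$ are fine, but they rest on a decoupling inequality you have not actually established and cannot establish in the form stated.
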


The exponent $ \frac{1} {2^ {(d^2+d)/2 } }$ tends very fast to zero with $d$, and it is desirable to improve 
this bound. For the case $d=2$, Costello \cite{costello2013bilinear} obtained the optimal bound 
$n^{-1/2+o(1) }$. In a more recent paper \cite{razborov2013real}, Razborov and Viola proved

\begin{theorem} \label{theorem:LOpolyRV}  There is a  constant $B$ such that the following holds for all $d,n$.
 If there are pairwise disjoint subsets $S_1, \dots, S_r$ each of size $d$ such that $a_{S_i}$ have absolute value at least 1
 for all $i$,  then for any open  interval $I$ of length 1,

$$\P (P(\xi_1, \dots, \xi_n)  \in I) \le B  r^{- \frac{1}{ d 2 ^{d+1} }}  . $$ \end{theorem}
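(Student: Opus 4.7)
The plan is to prove the theorem by induction on the degree $d$. The base case $d=1$ is immediate from Theorem~\ref{theorem:LO}: when $P$ is linear with $r$ coefficients $|a_{j_i}|\ge 1$ at distinct coordinates, Erd\H{o}s's refinement gives $\P(P\in I)\lesssim r^{-1/2}$, much stronger than the claimed $r^{-1/4}$.

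For the inductive step I would use a \emph{one-variable-per-$S_i$ decoupling}. Pick $j_i\in S_i$ for each $i$ and set $S_i':=S_i\setminus\{j_i\}$, $J:=\{j_1,\dots,j_r\}$. Writing $\eta_i:=\xi_{j_i}$ and expanding $P$ in the distinguished Rademacher variables $\eta_i$,
\[
P(\xi) \;=\; \sum_{T\subseteq[r]}\Bigl(\prod_{i\in T}\eta_i\Bigr)\,P_T\!\bigl(\xi|_{\overline J}\bigr),\qquad \deg P_T\le d-|T|,
\]
one checks that $P_{\{i\}}(\xi|_{\overline J})$ is a degree-$(d-1)$ polynomial whose coefficient at the monomial $\prod_{k\in S_i'}\xi_k$ is exactly $a_{S_i}$ (of absolute value $\ge 1$), and the $S_i'$ remain pairwise disjoint of size $d-1$---precisely the right structure to feed the inductive hypothesis back into suitable combinations of the $P_{\{i\}}$'s.

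The argument then conditions on $\xi|_{\overline J}$ and splits into two regimes. On the \emph{good} event that $\Omega(r)$ of the $|P_{\{i\}}(\xi|_{\overline J})|$ are of constant order, I would further condition on a small number of the $\eta_i$'s to freeze the higher-degree part $\sum_{|T|\ge 2}(\prod_{i\in T}\eta_i)P_T$, leaving $P$ as a linear form in the remaining $\eta_i$'s with $\Omega(r)$ coefficients of constant order, to which Erd\H{o}s--Littlewood--Offord supplies the gain $r^{-1/2}$. The complementary \emph{bad} event I would control via the inductive hypothesis at degree $d-1$, applied to an auxiliary polynomial built from the $P_{\{i\}}$'s---for instance a random signed combination $\sum_i\epsilon_iP_{\{i\}}$ for independent Rademacher $\epsilon_i$, viewed as a degree-$(d-1)$ polynomial in $\xi|_{\overline J}$ with a large-coefficient monomial on each of the disjoint $S_i'$.

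The main obstacle lies in carrying out this decoupling \emph{uniformly} over the arbitrary lower-order coefficients $a_S$: the higher-$T$ parts $P_T$ carry no structure, so the freezing step must not consume more than $o(r)$ of the Rademacher randomness in the $\eta_i$'s, and the auxiliary polynomial controlling the bad event must be chosen so that its large-coefficient structure on the $S_i'$ survives the perturbations coming from the arbitrary coefficients $a_{S_i'\cup\{j_l\}}$ with $l\ne i$. Balancing the Erd\H{o}s gain $r^{-1/2}$ on the good event against the inductive gain $r^{-1/((d-1)2^d)}$ on the bad event, via a H\"older-type combination, produces the recursion $a_d=\tfrac{2d}{d-1}\,a_{d-1}$ on the exponent $a_d$ in $\P(P\in I)\lesssim r^{-1/a_d}$; with $a_1=4$ this solves to $a_d=d\cdot 2^{d+1}$, matching the claim.
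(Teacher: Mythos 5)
This is a theorem the paper cites from Razborov and Viola \cite{razborov2013real} and does not reprove; the only comment the paper makes about it is the remark that a counting argument converts the CTV hypothesis (many large coefficients) into the disjoint-sets hypothesis (large rank). So there is no internal proof to compare against; one has to judge your sketch on its own merits and against the actual Razborov--Viola argument.

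The central step in your sketch does not work as described, and you yourself flag the difficulty without resolving it. After conditioning on $\xi|_{\overline J}$, the polynomial in the distinguished variables $\eta_1,\dots,\eta_r$ has the form $\sum_{T\subseteq[r],\,|T|\le d}c_T\prod_{i\in T}\eta_i$, and on your ``good'' event $\Omega(r)$ of the linear coefficients $c_{\{i\}}$ are of constant order. You then say you will ``condition on a small number of the $\eta_i$'s to freeze the higher-degree part'' and reduce to ELO. But the quadratic and higher pieces $\sum_{|T|\ge 2}c_T\prod_{i\in T}\eta_i$ couple essentially every pair of the $r$ surviving variables, and in the worst case the coefficients $c_T$ for $|T|\ge 2$ are arbitrary and can dwarf the $c_{\{i\}}$; fixing $o(r)$ of the $\eta_i$'s cannot turn this into a linear form in the remaining ones. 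This is exactly the hard case for arbitrary-coefficient anti-concentration, and it is precisely what the iterated Cauchy--Schwarz decoupling used by Costello--Tao--Vu and by Razborov--Viola is designed to kill: each decoupling step eliminates one level of dependence at the cost of squaring the probability, which is where the $2^{-\Theta(d)}$ in the exponent comes from. Your good/bad split does not produce that effect.

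The recursion at the end is not derived either. You write $a_d=\frac{2d}{d-1}a_{d-1}$ and observe that with $a_1=4$ this gives $a_d=d\,2^{d+1}$, which does match the claimed exponent; but no calculation is given to show that the ``Hölder-type combination'' of a bound $r^{-1/2}$ on the good event and $r^{-1/((d-1)2^d)}$ on the bad event yields this. If one tries the naive weighted split of $\P(P\in I)\le\P(P\in I\mid\text{good})+\P(\text{bad})$, there is no interpolation at all, just a sum, and it would not give anything like $r^{-1/(d\,2^{d+1})}$ unless both summands already do; if instead one tries an actual exponent interpolation $\theta\cdot\frac{1}{2}+(1-\theta)\cdot\frac{1}{(d-1)2^d}=\frac{1}{d\,2^{d+1}}$, the resulting $\theta$ is negative even for $d=2$. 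And the auxiliary ``random signed combination $\sum_i\epsilon_i P_{\{i\}}$'' is not a polynomial to which the inductive hypothesis obviously applies: the inductive hypothesis controls anti-concentration at a point, while what you need to control is the event that many $|P_{\{i\}}|$ are simultaneously small, which is a different (and stronger) statement. In short, the broad silhouette---induction on degree plus some decoupling---is in the right family, but the two load-bearing steps (freezing the high-degree part and balancing the two regimes) are gaps, and the Razborov--Viola proof proceeds differently, via iterated $L^2$-decoupling across one coordinate of each $S_i$ at a time, accumulating a $2^d$-th power before a final ELO-type step.
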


This theorem improves  the bound in Theorem \ref{theorem:LOpoly0} to $ m^{- \frac{1}{ d 2 ^{d+1} }} $ via a simple counting argument.

Researchers in analysis also 
considered anti-concentration of polynomials, for entirely different reasons. 
Carbery and Wright \cite {carbery2001distributional} consider polynomials with $\xi_i$ being iid Gaussian and showed 

\begin{theorem} \label{thm:CW}  There is a constant $B$ such that  
$$ \P ( | P (\xi_, \dots, \xi_n) | \le \epsilon \Var (P (\xi_, \dots, \xi_n))^{1/2} ) \le B \epsilon^{1/d} . $$
\end{theorem}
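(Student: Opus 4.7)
The plan is to induct on the degree $d$, combining hypercontractivity for Gaussian polynomials with a reduction to a univariate anti-concentration estimate. By homogeneity we may assume $\Var(P) = 1$, so the target becomes $\P(|P(\xi_1,\dots,\xi_n)| \le \eps) \le B_d\,\eps^{1/d}$. The base case $d = 1$ is immediate: $P$ is then an affine function of the Gaussian vector, hence itself Gaussian with unit variance, and the bounded density of $\CN(0,1)$ gives the bound.

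For the inductive step, the key univariate fact is that any $p(t) = c_d\prod_{i=1}^{d}(t - \alpha_i)$ has $\{t : |p(t)| \le \eps\} \subseteq \bigcup_{i=1}^{d}\bigl(\alpha_i - (\eps/|c_d|)^{1/d},\, \alpha_i + (\eps/|c_d|)^{1/d}\bigr)$, so the standard Gaussian measure of this set is $O\bigl(d(\eps/|c_d|)^{1/d}\bigr)$. I would lift this to $\R^n$ by slicing: for a unit vector $v$, decompose $\xi = z + tv$ with $z \perp v$ and $t \sim \CN(0,1)$ independent. Then $P(z + tv)$ is a univariate polynomial in $t$ whose leading coefficient equals $P_d(v)$, the homogeneous top-degree part of $P$ evaluated at $v$ (in particular, independent of $z$). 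Conditioning on $z$ and averaging over a random $v$ yields
\[
\P(|P(\xi)| \le \eps) \;\le\; C_d\,\eps^{1/d}\,\E_{v}\!\left[\,|P_d(v)|^{-1/d}\,\right].
\]

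The main obstacle is controlling the $v$-average of $|P_d(v)|^{-1/d}$, which is itself an anti-concentration statement for the degree-$d$ polynomial $P_d$; to break the apparent circularity I would run a dyadic decomposition on the size of $|P_d(v)|$, coupling the one-dimensional bound (strong when $|P_d(v)|$ is large) with the inductive hypothesis applied to the truncation $Q := P - P_d$ of degree $\le d - 1$ (strong when $|P_d(v)|$ is small, since then $P$ is well approximated by $Q$ and $\Var(Q)$ is close to $1$, the $P_d$ contribution being absorbed via Chebyshev). Hypercontractivity for Gaussian polynomials --- namely the Nelson--Gross inequality $\|P_d\|_q \le (q-1)^{d/2}\|P_d\|_2$ for $q \ge 2$ --- enters via a Paley--Zygmund argument to guarantee that $|P_d(v)| \ge c_d\,\Var(P_d)^{1/2}$ on a $v$-set of measure bounded below by a constant depending only on $d$. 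This dimension-free lower bound is the crucial input that allows the exponent $1/d$ to propagate through the induction without loss; any cruder moment estimate in its place would yield a strictly worse power of $\eps$.
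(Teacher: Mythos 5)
The paper does not prove this statement; it is quoted verbatim from Carbery and Wright \cite[Theorem 8]{carbery2001distributional}, and the proof there is a genuinely different convexity/log-concavity argument, not a slicing argument. So you are on your own here, and unfortunately your plan has a gap that I do not see how to close.

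The issue is dimension dependence in the slicing step. For a \emph{unit} vector $v$ with $P_d(v)\neq 0$, the univariate root-counting bound gives $\P(|P(\xi)|\le\eps)\le \frac{2d}{\sqrt{2\pi}}\,\eps^{1/d}\,|P_d(v)|^{-1/d}$, so your intermediate inequality $\P(|P(\xi)|\le\eps)\le C_d\,\eps^{1/d}\,\E_v[|P_d(v)|^{-1/d}]$ is a valid averaging of it. But the quantity $\E_v[|P_d(v)|^{-1/d}]$ over the sphere (and even $\inf_{|v|=1}|P_d(v)|^{-1/d}$) is not $O_d(1)$: it grows with $n$. The Paley--Zygmund input you invoke, namely that $|P_d(v)|\ge c_d\,\Var(P_d)^{1/2}$ on a $v$-set of measure bounded below by a constant depending only on $d$, is true when $v\sim N(0,I_n)$, but it is \emph{false} for $v$ on the unit sphere. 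Concretely, take $n$ even, $d=2$, and $P=P_2(\xi)=\sqrt{2/n}\sum_{i=1}^{n/2}\xi_{2i-1}\xi_{2i}$, which has $\Var(P_2)=1$. For \emph{every} unit vector $v$ one has $|P_2(v)|\le \sqrt{2/n}\cdot\frac12=1/\sqrt{2n}$, so $\{v:|v|=1,\ |P_2(v)|\ge c_2\}$ is empty once $n>1/(2c_2^2)$, and even the optimal slicing direction only yields $\P(|P|\le\eps)\lesssim d\,\eps^{1/2}\,n^{1/4}$, which blows up with $n$. If instead you take $v$ Gaussian, Paley--Zygmund does give $|P_d(v)|\gtrsim c_d\Var(P_d)^{1/2}$ on a $\delta_d$-fraction of $v$, but then the slicing direction must be normalized, and the Gaussian density of $\langle\xi,v/|v|\rangle$ picks up the factor $|v|\approx\sqrt n$, so you lose the same power of $n$. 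Finally, the fallback Chebyshev/truncation step is also not well posed as stated: $|P_d(v)|$ being small for a particular direction $v$ does not make $\Var(P_d)$ small, so it does not make $P\approx Q=P-P_d$; the case $\Var(P_d)=\Theta(1)$ with $\sup_{|v|=1}|P_d(v)|\to 0$ (as in the example above) is exactly where the induction does not close. The dimension-freeness of Carbery--Wright is precisely what a one-direction slicing argument cannot see, and this is why their proof proceeds via log-concavity and a rescaling/variational comparison rather than by conditioning on a one-dimensional Gaussian marginal.
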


Their  result has been extended  by Mossel, O'donnell and  Oleszkiewicz  \cite{mossel2010noise} to general variables, at a cost of an extra term on the right hand side, which involves 
the regularity of $P$ (see Section 3). 

\vskip2mm

The goal of this paper is to further improve these anti-concentration bounds, with several applications in complexity theory. 
 Our new results will be  nearly optimal in a wide range of parameters. 
Let $[n] = \{1, 2, \dots, n\}$. Following \cite{razborov2013real}, we first introduce a definition 
 
\begin{definition} 
For a degree $d$ multi-linear polynomial of the form \eqref{form}, the \emph{rank} of $P$, denoted by $\rank(P)$,
 is the largest integer $r$ such that there exist disjoint sets $S_1,\ldots,S_r \subseteq [n]$ of size $d$ with $|a_{S_j}| \ge 1$, for $j \in [r]$. 
\end{definition}

Our first main result  concerns the Rademacher case. Let $\xi_i, i=1, \dots, n$ be iid Rademacher random variables. 

\begin{theorem} \label{theorem:LOpoly1}  There is an absolute constant $B$ such that the following holds for all $d,n$. Let $P$ be a polynomial of the form \eqref{form} whose rank $r\ge 2$. Then for any  interval $I$ of length 1,

$$\P (P(\xi_1, \dots, \xi_n)  \in I) \le \min \left (\frac{B d^{4/3}\sqrt {\log r}}{ r^{ \frac{1} {4d+1} } }, \frac{\exp(B d^{2}(\log \log r)^{2})}{\sqrt r}\right ) . $$ \end{theorem}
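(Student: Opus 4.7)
The plan is to prove both estimates by induction on the degree $d$. The base case $d = 1$ is the Erd\H{o}s--Littlewood--Offord inequality for linear polynomials, which already yields the optimal bound $O(r^{-1/2})$. For the inductive step, the central tool is a decoupling inequality: for a multilinear polynomial $P$, Cauchy--Schwarz combined with the identity $P(\xi) - P(\xi^{(j)}) = 2\xi_j\,\partial_j P(\xi_{-j})$ (where $\xi^{(j)}$ denotes flipping the $j$-th coordinate and $\partial_j P$ is the multilinear partial derivative) gives, for any interval $I$ of length $1$,
$$\Pr(P(\xi) \in I)^2 \le \tfrac{1}{2} + \tfrac{1}{2}\Pr\bigl(\partial_j P(\xi_{-j}) \in J\bigr),$$
where $J$ has length $O(1)$. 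It therefore suffices to bound the anti-concentration of $\partial_j P$, a polynomial of degree $d-1$, provided we can find $j$ such that $\partial_j P$ inherits enough rank from $P$.

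This is where the main combinatorial difficulty lies. Since the witnessing sets $S_1,\ldots,S_r$ for the rank are pairwise disjoint, any fixed index $j$ lies in at most one $S_i$, so a naive single partial derivative preserves essentially one witnessing set only. To circumvent this, I would precede each decoupling step by a randomized reduction --- either a random restriction of a large subset of coordinates, or a simultaneous decoupling over several coordinates --- designed to produce a degree $d-1$ polynomial whose rank is at least $\Omega(r^{1-c/d})$ for some explicit constant $c$. The key point is that by averaging over the random restriction, cancellations among the surviving coefficients can be suppressed in expectation, so that a positive-probability realization yields a polynomial of guaranteed large rank.

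Unwinding the recursion with the Erd\H{o}s--Littlewood--Offord base case, one obtains the first bound, with the exponent $1/(4d+1)$ reflecting the combined effect of the $d$ successive rank losses and the squaring in each Cauchy--Schwarz step. The factor $d^{4/3}\sqrt{\log r}$ absorbs the per-level multiplicative overhead from the restriction step.

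For the second, near-optimal bound $\exp(Bd^2(\log\log r)^2)/\sqrt{r}$, the plan is to replace the Erd\H{o}s--Littlewood--Offord base case by a Hal\'asz-type inverse Littlewood--Offord theorem (e.g.\ in the Nguyen--Vu form), which gives the sharp $r^{-1/2}$ rate once one knows that no exceptional additive-combinatorial structure sits among the surviving coefficients. The $\exp(O(d^2(\log\log r)^2))$ prefactor then arises from iterating the sharp bound through the $d$ layers of induction, with each level contributing a $\log\log r$ loss in the exponent. I expect the primary obstacle to be precisely the rank-preservation step after decoupling: controlling the combinatorial effect of cancellations while losing no more than a polylogarithmic factor per level is what drives both exponents and is the technical heart of the argument.
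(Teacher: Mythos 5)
Your proposal is a genuinely different route from the paper's, and it has a fundamental flaw that prevents it from reaching the claimed exponent.

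The paper does not use decoupling or degree reduction at all. Its argument is: (i) a \emph{regularity lemma} (Proposition~\ref{regularity lemma}) shows that conditioning on the high-influence variables produces, with high probability, either a $\tau'$-regular polynomial or one whose constant term dominates (``$(C,\beta)$-tight''); (ii) for the regular leaves, the invariance principle of Mossel--O'Donnell--Oleszkiewicz transfers the problem to Gaussian inputs, where Carbery--Wright gives an $\alpha^{1/d}$-type small-ball bound with an additive $\tau^{1/(4d+1)}$ error (Proposition~\ref{regular}) --- this is precisely where the $1/(4d+1)$ exponent enters; (iii) for the tight leaves, the polynomial is essentially a large constant. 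The second, near-$r^{-1/2}$ bound is obtained by a further recursion (following Kane), in which a random restriction of a regular polynomial is shown to be ``$\gamma$-spread'' with small probability (Proposition~\ref{lm:goodblocks}), trading rank $r\to r^{1-1/4d}$ for a probability gain $r^{-1/8d}$ per step, iterated $\Theta(d\log\log r)$ times.

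The gap in your plan is the decoupling step itself. Each Cauchy--Schwarz/decoupling pass replaces $\P(P\in I)$ by roughly $\P(\text{degree-}(d-1)\text{ poly}\in J)^{1/2}$. Iterating to degree $1$ and applying Erd\H{o}s--Littlewood--Offord at the bottom gives a bound of the shape $\big(r'^{-1/2}\big)^{1/2^{d-1}}$, i.e.\ an exponent that decays \emph{exponentially} in $d$ no matter how carefully you preserve rank at each step --- this is exactly why Theorems~\ref{theorem:LOpoly0} and~\ref{theorem:LOpolyRV} have exponents $2^{-(d^2+d)/2}$ and $1/(d2^{d+1})$. No amount of ``suppressing cancellations in expectation'' repairs this: the squaring is structural, and it forces the probability exponent to halve at every level. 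Your proposed fix (random restriction / simultaneous decoupling) can only help with rank preservation, not with the squaring, so the target exponent $1/(4d+1)$ is out of reach by this method. Replacing the base case with a Hal\'asz-type inverse theorem does not change this calculus either; the bottleneck is entirely in the iterated squaring, not the quality of the degree-$1$ bound. By contrast, the regularity/invariance route avoids degree reduction entirely, which is why it can deliver a polynomial-in-$d$ exponent and, with the recursive spread argument, the near-$r^{-1/2}$ rate.

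One further small point: the single-coordinate decoupling inequality you write down, $\P(P(\xi)\in I)^2 \le \tfrac12 + \tfrac12 \P(\partial_j P(\xi_{\setminus j})\in J)$, is essentially vacuous as stated, since with probability $1/2$ the flip changes nothing and the right side is always at least $1/2$. The usable form of decoupling in this setting is the tensorized Costello--Tao--Vu version, which raises the probability to a higher power ($4$ or $2^d$), accentuating rather than alleviating the exponential loss.
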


For the case when $d$ is fixed, it has been conjectured \cite{nguyen2013small}  that $\P (P(\xi_1, \dots, \xi_n) \in I) = O(r^{-1/2} )$. This conjectural bound is a natural generalization of Erdos-Littlewood-Offord result and is optimal, as shown 
by taking $P= (\xi_1 + \dots + \xi_n)^d$, with $n$ even.  For this $P$,  the rank $r= \Theta (n)$ and $\P  ( |P| \le 1/2 ) = \P ( P= 0) = \Theta (n^{-1/2} )$. 
Our result confirms this  conjecture up to the sub polynomial  term $\exp( B d^2 (\log \log r)^2) $. 

 In applications it is important that we can allow the  degree  $d$ tends to infinity with $n$.  
Our bounds in Theorem \ref{theorem:LOpoly1} are non-trivial for degrees up to $c \log r/\log \log r$, for some positive constant $c$.  Up to the $\log \log$ term, this is  as good as it gets, as one cannot hope to get any non-trivial bound  for polynomials of 
degree $\log_2 r$. For example, the degree $d$ polynomial on $2^d \cdot d$ variables defined by $P(\xi) = \sum_{i=1}^{2^d} \prod_{j=1}^d (\xi_{ij}+1)$, where $\xi_{ij}$ are iid Rademacher random variables, has $r = 2^d$ and $\P(P(\xi) = 0) = \Omega(1)$.

Next,  we generalize our result for non-Rademacher  distributions.  As a first step, we consider the  $p$-biased distribution on the hypercube. 
For $p \in (0,1)$, let $\mu_p$ denote the Bernoulli variable with  $p$-biased distribution: $\P_{x\sim \mu_p}(x=0) = 1-p$, $\P_{x\sim \mu_p}(x=1) = p$ and let $\mu_p^{n}$ be the product distribution on $\{0, 1\}^{n}$.

\begin{theorem}\label{th:littlewoodoffordbiasedmain}
There is an absolute constant $B$ such that the following holds.
Let $P$ be a polynomial of the form \eqref{form} whose rank $r\ge 2$. Let $p$ be such that $\tilde r:= 2^{d}\alpha^{d}r\ge 3$ where $\alpha := \min\{p, 1-p\}$. Then for any interval $I$ of length 1, 
$$\P_{x\sim \mu_p^n}(P(x) \in I) \leq \min\left (\frac{B d^{4/3}(\log \tilde r )^{1/2}}{(\tilde r )^{1/(4d+1)}}, \frac{\exp(B d^2(\log \log (\tilde r )^2)}{\sqrt{ \tilde r }}\right ).$$
\end{theorem}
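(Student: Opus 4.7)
The plan is to reduce the $p$-biased case to Theorem~\ref{theorem:LOpoly1} via a two-step decoupling of each $x_i$ into a Bernoulli ``selector'' and an independent fair coin. First, the substitution $x_i\mapsto 1-x_i$ sends $P(x)$ to $P'(x'):=P(\mathbf 1-x')$; for any $|U|=d$ the only $S\supseteq U$ with $|S|\le d$ is $S=U$, so the coefficient of the degree-$d$ monomial $\prod_{i\in U}x_i'$ in $P'$ is $(-1)^d a_U$. Hence $\rank(P')=\rank(P)=r$, and since $x_i'\sim\mu_{1-p}$ we may assume $p\le 1/2$ and $\alpha=p$.

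Write $x_i=y_i\zeta_i$ with independent $y_i\sim\mathrm{Bern}(2p)$ and $\zeta_i\sim\mathrm{Bern}(1/2)$; indeed $\P(x_i=1)=2p\cdot\tfrac12=p$. Condition on $\mathbf y$ and set $T:=\{i:y_i=1\}$, so that
\[
P(x)=Q(\zeta_T):=\sum_{S\subseteq T,\,|S|\le d}a_S\prod_{i\in S}\zeta_i.
\]
Pass to Rademacher variables $\xi_i=2\zeta_i-1$ and rescale: $\widehat Q(\xi):=2^d Q((1+\xi)/2)$. Direct expansion gives the coefficient of $\prod_{i\in R}\xi_i$ in $\widehat Q$ as $\sum_{R\subseteq S\subseteq T}a_S\,2^{d-|S|}$; for $|R|=d$ only $S=R$ contributes, so the coefficient equals $a_R$. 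Therefore $\rank(\widehat Q)\ge N$, where $N:=|\{j:S_j\subseteq T\}|$.

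Because the $S_j$'s are pairwise disjoint, the indicators $\mathbf 1_{S_j\subseteq T}=\prod_{i\in S_j}y_i$ are \emph{independent} Bernoulli$((2p)^d)$ variables, so $N$ is binomially distributed with mean $r(2p)^d=\tilde r$, and a Chernoff bound yields $\P(N<\tilde r/2)\le e^{-\tilde r/8}$. On the event $\{N\ge\tilde r/2\}$ (so $\rank(\widehat Q)\ge 2$, using $\tilde r\ge 3$), Theorem~\ref{theorem:LOpoly1} controls $\P(\widehat Q\in J\mid\mathbf y)$ for every unit interval $J$; since $P(x)\in I\Leftrightarrow\widehat Q(\xi)\in 2^d I$ and $|2^d I|=2^d$, covering $2^d I$ by $\lceil 2^d\rceil$ unit sub-intervals and union-bounding yields
\[
\P(P(x)\in I\mid\mathbf y,\,N\ge\tilde r/2)\le 2^d\cdot\min\!\left(\frac{Bd^{4/3}(\log\tilde r)^{1/2}}{\tilde r^{1/(4d+1)}},\;\frac{\exp(Bd^2(\log\log\tilde r)^2)}{\sqrt{\tilde r}}\right).
\]
Averaging over $\mathbf y$ and adding the Chernoff tail (which is $O(\tilde r^{-1/2})$ and thus absorbable) gives the claim.

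The main obstacle is absorbing the covering factor $2^d$ into the absolute constant $B$ on both branches of the $\min$. For the subpolynomial branch this is clean: $2^d=e^{d\log 2}\le e^{Bd^2(\log\log\tilde r)^2}$ once $B$ is chosen large enough (since $\tilde r\ge 3$ keeps $(\log\log\tilde r)^2$ bounded below, and the residual small-$(d,\tilde r)$ cases fall to the trivial bound $\le 1$). For the $\tilde r^{-1/(4d+1)}$ branch the absorption is delicate: one checks that whenever this branch is the smaller one and is genuinely useful ($<1$), the regime forces $\log\tilde r$ to be so large that $2^d$ only perturbs the constant, while in the complementary regime the $\min$ is governed by the subpolynomial branch already treated. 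Carrying out this case analysis carefully --- or, alternatively, rerunning the proof of Theorem~\ref{theorem:LOpoly1} directly on the Rademacher polynomial $\widehat Q$ with its known coefficient structure and avoiding the wasteful covering step --- is the most technical part of the argument.
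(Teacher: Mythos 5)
Your decoupling is the same as the paper's: factor each $\mu_p$-variable as a product of a fair coin and an independent Bernoulli$(2p)$ selector, condition on the selectors so the polynomial becomes one on fair coins with coefficients $b_{S_j}=a_{S_j}\prod_{i\in S_j}\xi'_i$, note that the $\mathbf 1[|b_{S_j}|\ge 1]$ are independent Bernoulli$((2p)^d)$ since the $S_j$ are disjoint, apply Chernoff, and then invoke Theorem~\ref{theorem:LOpoly1}. So conceptually you are reproducing the paper's argument. Where you differ is in your honesty about the last step: Theorem~\ref{theorem:LOpoly1} is stated for Rademacher ($\pm 1$) variables, while after conditioning one has a polynomial in fair $\{0,1\}$ variables. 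You correctly compute that the change of variables $\zeta_i=(1+\xi_i)/2$ divides every degree-$d$ coefficient by $2^d$, so to recover rank at least $N$ one must rescale by $2^d$, which dilates the target interval to length $2^d$. The paper elides this: its proof simply says ``applying Theorem~\ref{theorem:LOpoly1} to this polynomial,'' treating the rank in the $\{0,1\}$ representation as if it transferred verbatim, which it does not.

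Your worry that the $2^d$ covering factor is not simply absorbable is also correct, and this is a genuine weakness of the union-bound route you spell out. For the second branch the absorption works because $\tilde r\ge 3$ keeps $(\log\log\tilde r)^2$ bounded away from $0$, so $2^d\le\exp((B'-B)d^2(\log\log\tilde r)^2)$ for a fixed additive bump in $B$. But for the first branch there is a regime (take $d$ large and $\tilde r$ only moderately large, so that $Bd^{4/3}\sqrt{\log\tilde r}/\tilde r^{1/(4d+1)}$ is a bit below $1$) where $2^d$ times either branch exceeds $1$ yet the claimed bound is below $1$; the union bound then proves nothing. The right repair is the one you mention as a fallback: do \emph{not} cover $2^dI$ by unit intervals, but instead run the proof of Theorem~\ref{theorem:LOpoly1} directly on $\widehat Q$ with the interval $2^dI$. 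In the regular-leaf case Proposition~\ref{regular} gives $\P(|\widehat Q|\le\alpha)\le Cd\,\alpha^{1/d}/\Var(\widehat Q)^{1/2d}+Cd\,\tau'^{1/(4d+1)}$, and with $\alpha=2^d$ the factor $\alpha^{1/d}=2$ is a constant rather than $2^d$; in the $(C,\beta)$-tight case one has $|P^*(\rho)|=\Omega(\sqrt{\tilde r})\gg 2^d$, so membership in any interval of length $2^d$ still forces $|q_\rho|\ge\tfrac12|P^*(\rho)|$, an event of probability $\le\beta$. With this modification the entire conversion costs only an absolute constant and the theorem follows. As written, your union-bound version does not close; your fallback does, and it in fact supplies a step that the paper itself leaves implicit.
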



The distribution $\mu_p^n$ plays an essential role in probabilistic combinatorics. For example, it is the ground distribution for the random graphs $G(N,p)$ (with $n:= { N \choose 2}$). 
We discuss an application in the theory of random graphs in the next section. 

Finally, we present a result that applies to virtually all sets of independent random variables, with a weak requirement that these variables do not concentrate on a short interval. 

\begin{theorem}\label{thm:generalist} There is an absolute constant $B$ such that the following holds.
Let $\xi_1, \dots, \xi_n$ be independent (but not necessarily iid) random variables. Let $P$ be a polynomial of the form \eqref{form} whose rank $r\ge 2$.
 Assume that there are positive numbers $p$ and $\epsilon$ such that 
for each $1 \le i \le n$,  there is a number $y_i$ such that $\min \{ \P(\xi_i \le y_i) , \P(\xi_i > y_i ) \}=p$ and $\P (| \xi_i -y_i| \ge 1)  \ge  \epsilon $. 
Assume furthermore that $\tilde r:= (p\epsilon)^d r \ge 3$.  Then for any interval $I$ of length 1 

$$\P(P(\xi_1, \dots, \xi_n) \in I) \leq \min\left (\frac{B d^{4/3}(\log \tilde r )^{1/2}}{(\tilde r )^{1/(4d+1)}}, \frac{\exp(B d^2(\log \log (\tilde r )^2)}{\sqrt{ \tilde r }}\right ).$$
\end{theorem}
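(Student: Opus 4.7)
The strategy is to reduce Theorem \ref{thm:generalist} to Theorem \ref{th:littlewoodoffordbiasedmain} with $p = 1/2$, by extracting a hidden fair Bernoulli structure from the coordinates via symmetrization. Since the rank of a multilinear polynomial depends only on its top-degree coefficients, and the affine shift $\xi_i \mapsto \xi_i - y_i$ leaves those coefficients unchanged, we may assume throughout that $y_i = 0$ for every $i$, i.e.\ $\P(\xi_i \le 0), \P(\xi_i > 0) \ge p$ and $\P(|\xi_i| \ge 1) \ge \epsilon$.

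Introduce independent copies $\xi_1', \ldots, \xi_n'$ of $\xi_1, \ldots, \xi_n$, jointly independent of everything, and set $M_i := \max(\xi_i, \xi_i')$, $m_i := \min(\xi_i, \xi_i')$. For independent fair Bernoulli variables $\eta_1, \ldots, \eta_n \in \{0,1\}$, define the shuffled vector $\tilde\xi_i := m_i + \eta_i(M_i - m_i)$. Conditionally on the unordered pair $\{\xi_i, \xi_i'\}$, the coordinate $\tilde\xi_i$ is uniform on $\{m_i, M_i\}$, so $\tilde\xi$ has the same joint distribution as $\xi$. Hence
\[
\P(P(\xi) \in I) \;=\; \E_{M,m}\!\bigl[\,\P_\eta\bigl(\tilde P(\eta) \in I\bigr)\,\bigr],
\]
where $\tilde P(\eta) := P(m + \eta \cdot (M - m))$ is a multilinear polynomial of degree $\le d$ in $\eta$. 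Expanding each factor $m_i + \eta_i(M_i - m_i)$, the coefficient of $\prod_{i \in T}\eta_i$ for $|T| = d$ is exactly $a_T \prod_{i \in T}(M_i - m_i)$. Thus, writing $T^\star := \{i : M_i - m_i \ge 1\}$ and letting $S_1, \ldots, S_r$ be the witness sets for $\rank(P)$, we have $\rank(\tilde P) \ge X := |\{j : S_j \subseteq T^\star\}|$.

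To lower bound $X$, observe that $|\xi_i - \xi_i'| \ge 1$ whenever $\xi_i \ge 1$ and $\xi_i' \le 0$, or vice versa. Setting $a_\pm := \P(\pm \xi_i \ge 1)$, we have $a_+ + a_- \ge \epsilon$ and $\P(\xi_i' \le 0), \P(\xi_i' > 0) \ge p$, so splitting on the sign of $\xi_i$ yields $\P(i \in T^\star) \ge p\epsilon$. Since the pairs $(\xi_i, \xi_i')$ are independent across $i$ and the sets $S_j$ are pairwise disjoint, the indicators $\mathbf{1}_{S_j \subseteq T^\star}$ are independent Bernoulli variables with mean at least $(p\epsilon)^d$; therefore $X$ is a sum of independent Bernoullis with $\E[X] \ge (p\epsilon)^d r = \tilde r$, and a standard Chernoff bound gives $\P(X < \tilde r / 2) \le e^{-\tilde r / 8}$. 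On the event $\{X \ge \tilde r / 2\}$, Theorem \ref{th:littlewoodoffordbiasedmain} applied to $\tilde P$ with $p = 1/2$ (so that $\alpha = 1/2$ and the effective parameter there becomes $2^d(1/2)^d X = X \ge \tilde r/2$) supplies the displayed minimum-type bound with $\tilde r$ replaced by $\tilde r/2$, which only changes the absolute constant. Combining this with the Chernoff tail, and using a trivial bound for $\tilde r$ below a suitable absolute constant (where the conclusion is vacuous after enlarging $B$), completes the proof. The main technical point is the rank computation for $\tilde P$: the clean formula for its top-degree coefficients requires multilinearity of $P$, and the ensuing subgaussian concentration of $X$ crucially exploits the disjointness built into the definition of rank, which is what makes the indicators $\mathbf{1}_{S_j \subseteq T^\star}$ independent.
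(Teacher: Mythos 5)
Your proof is correct and uses the same overarching reduction as the paper --- factor each coordinate as a hidden Bernoulli variable $\eta_i$ choosing between two values, condition on everything else, bound the rank of the resulting $\eta$-polynomial with Chernoff, and invoke Theorem~\ref{th:littlewoodoffordbiasedmain} --- but with a genuinely different factorization. The paper splits $\xi_i$ by the sign of $\xi_i - y_i$: it introduces independent $\xi_i^{\pm}$ carrying the conditional laws of $\xi_i$ on $\{\xi_i > y_i\}$ and $\{\xi_i \le y_i\}$, takes $\eta_i$ to be a $p$-biased coin selecting between them, and shows $\P(\xi_i^+ - \xi_i^- \ge 1) \ge \epsilon/2$. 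You instead introduce a ghost sample $\xi_i'$, let $(m_i, M_i)$ be the order statistics of $(\xi_i, \xi_i')$, and take $\eta_i$ to be a \emph{fair} coin selecting between them, with the per-coordinate bound $\P(M_i - m_i \ge 1) \ge p\epsilon$. The factors of $p$ and $\epsilon$ get allocated to different stages (the paper feeds a $p$-biased $\eta$ and rank $\ge r(\epsilon/2)^d/2$ into Theorem~\ref{th:littlewoodoffordbiasedmain}; you feed a fair $\eta$ and rank $\ge r(p\epsilon)^d/2$), but both land at the same effective parameter $\tilde r/2$. Your symmetrization has the mild advantage of producing the Rademacher/fair-coin case directly and bypassing the paper's preliminary reduction to the case $\P(\xi_i \le y_i) = p$ (versus $\P(\xi_i > y_i) = p$); the observation that in a multilinear $P$ the coefficient of $\prod_{i\in T}\eta_i$, $|T|=d$, in $\tilde P$ is exactly $a_T\prod_{i\in T}(M_i - m_i)$ plays the same role as the paper's identity for $b_{S_j}$.
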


Notice that even in the gaussian case, Theorem \ref{thm:generalist} is incomparable to Theorem \ref{thm:CW}.  If we use Theorem \ref{thm:CW} to bound 
$\P ( P \in I)$ for an interval $I$ of length 1, then we need to set $\epsilon = \Var (P)^{-1/2 }$, and the resulting bound  becomes $\frac{B}{ (\Var P) ^{1/2d} } $. 
For sparse polynomials, it is typical that 
$r $ is much larger than $(\Var P) ^{1/d} $ and in this case  our bound is superior.  To illustrate this point, let us fix a constant $d > c >0$ and consider 

$$P := \sum_{S \subset \{1, \dots, n \}, |S|=d} a_S \prod_{ i \in S} x_i $$ where $a_S$ are  iid random Bernoulli variables with $\P( a_S=1) = n^{-c}$. It is easy to show that the following holds with 
probability $1-o(1)$

\begin{itemize} 

\item For any set $X \subset \{1, \dots, n \}$ of size at least $n/2$, there is a subset $S \subset X, |S|=d$,  such that $a_S=1$.
\item The number nonzero coefficients is at most $n^{d-c} $. 
\end{itemize} 

In other words, these two conditions are typical for a sparse polynomial with roughly $n^{d-c}$ nonzero coefficients. 
On the other hand, if the above two conditions holds, then we have $\Var (P) \le n^{d-c} $ and $r \ge n/2d$ (by a trivial greedy algorithm). Our bound  implies that 

$$\P ( P \in I ) \le C(d)  n^{-1/2 +o(1) } $$ while Cabery-Wright bound only gives 

$$\P ( P \in I) \le C(d)  n^{- 1/2 +c/2d }. $$

The rest of the paper is organized as follows. In Section \ref{app} below, we discuss applications in complexity theory and graph theory, with one long proof delayed to Section \ref{app-proof}. Sections \ref{reg-pol} and \ref{reg-lmm} are devoted to some combinatorial lemmas. In Section \ref{main-proof}, we treat polynomials with Rademacher variables. The generalizations are discussed in Section \ref{gen-proof}.  All asymptotic notations are used under the assumption that $n$ tends to infinity. All the constants are absolute, unless otherwise noted.

\section{ Applications} \label{app}
\subsection{Applications in complexity theory}
We use our anti-concentration results to prove lower bounds for approximating Boolean functions by polynomials in the {\sl Hamming metric}. The notion of approximation we consider is as follows. 
\begin{definition}
Let $\epsilon > 0$ and $\mu$ be a distribution on $\{0, 1\}^n$. For a Boolean function $f:\{0, 1\}^n \to \{0, 1\}$ and a polynomial $P:\R^n \to \R$, we say $P$ $\epsilon$-approximates $f$ with respect to $\mu$ \footnote{We drop $\mu$ in the description when it is clear from context or if it is the uniform distribution.} if 
$$\P_{x\sim\mu}(P(x) = f(x)) > 1-\epsilon.$$
We define $d_{\mu,\epsilon}(f)$ to be the least $d$ such that there is a degree $d$ polynomial which $\epsilon$-approximates $f$ with respect to $\mu$.
\end{definition}

An alternate ({dual}) way to view the above notion is in terms of distributions over low-degree polynomials---``randomized polynomials''---which approximate the function in the worst-case. In particular, by Yao's min-max principle, $d_{\mu,\epsilon}(f) \leq d$ for every distribution $\mu$ if and only if there exists a distribution $\mathcal D$ over degree at most $d$ polynomials which approximates $f$ in the worst-case: for all $x$, $\P_{P \sim \mathcal D}[P(x) = f(x)] > 1- \epsilon$.

Approximating Boolean functions by polynomials in the Hamming metric was first considered in the works of Razborov \cite{razborov1987lower} and Smolensky \cite{smolensky1987algebraic} over fields of finite characteristic as a technique for proving lower bounds for small-depth circuits. This was also studied in a similar context over real numbers by the works of \cite{beigel1991perceptron}, \cite{aspnes1994expressive}; the latter work uses them to prove lower bounds for $AC(0)$. More recently, in a remarkable result, Williams \cite{williams2014faster} (also see \cite{williams2014polynomial, abboud2015more}) used polynomial approximations in Hamming metric for obtaining the best known algorithms for all-pairs shortest path and other related algorithmic questions. Here, we study lower bounds for the existence of such approximations. 

{\bf Approximating Parity.} Let $par_n:\{0, 1\}^n \to \{0, 1\}$ denote the parity function: $par_n(x) = x_1 \oplus x_2 \oplus \cdots \oplus x_n$ (where arithmetic is mod $2$). 

In \cite{razborov2013real}, Razborov and Viola introduced another way to look at this problem. For two functions $f, g : \{0,1\}^n \rightarrow \R$,   define their "correlation" to be the quantity
$$ Cor_n (f, g) = \P _x (f(x) =g(x) ) -1/2 , $$ 
where $x$ is uniformly distributed over  $\{0,1\}^n$.  They highlighted the following challenge 

{\bf Challenge.} Exhibit an explicit boolean function $f: \{0,1 \}^n \rightarrow \{0,1 \}  $ such that for any real polynomial $P$ of degree 
$\log_2 n$, one has 
$$\Cor_n (f,P) \le  o( 1/\sqrt n ). $$ 

This challenge is motivated by studies in complexity theory and has connections to many other problems, such as the famous rigidity problem;
see \cite{razborov2013real} for more discussion. 

The Parity function seems to be a natural candidate
in problems like this. Razborov and Viola, using Theorem \ref{theorem:LOpolyRV}, proved

\begin{theorem} \label{theorem:RV1}  \cite{razborov2013real} For all sufficiently large $n$, $\Cor_n (par_n, P) \le 0$ for any real polynomial $P$ of degree at most $\frac{1}{2} \log_2 \log_2 n $.
\end{theorem}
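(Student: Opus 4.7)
The plan is based on the single observation that the equality $P(x)=par_n(x)$ forces $P(x)\in\{0,1\}\subseteq[0,1]$, a fixed length-one interval, so that
\[
\Pr(P(x)=par_n(x))\;\le\;\Pr(P(x)\in[0,1]),
\]
and the right-hand side is directly amenable to Theorem~\ref{theorem:LOpolyRV}.

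I would first pass to Rademacher variables $y_i=1-2x_i$ and let $\tilde P$ denote the resulting multilinear polynomial of degree at most $d$. Applying Theorem~\ref{theorem:LOpolyRV} to $\tilde P$ on the interval $I=[0,1]$ yields
\[
\Pr(\tilde P(y)\in[0,1])\;\le\;B\,r^{-1/(d\cdot 2^{d+1})},
\]
where $r$ is the rank of $\tilde P$ at degree $d$. For $d\le\tfrac12\log_2\log_2 n$ one has $d\cdot 2^{d+1}\le 2\sqrt{\log_2 n}\cdot\log_2\log_2 n = o(\log n)$, so the target threshold $r_0:=(2B)^{d\cdot 2^{d+1}}$ is subpolynomial in $n$; as soon as $\tilde P$ has rank at least $r_0$, the bound above is at most $1/2$, which gives the theorem.

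The remaining task is to handle polynomials $\tilde P$ of rank less than $r_0$. Two simple reductions cover special sub-cases: (i) if $\tilde P$ is independent of some coordinate $y_i$, then flipping $y_i$ flips $par_n$ but not $\tilde P$, so conditioning on $y_{[n]\setminus\{i\}}$ gives $\Pr(P=par_n)\le 1/2$ directly; (ii) if every size-$d$ coefficient of $\tilde P$ has magnitude smaller than $1$, rescale by $M:=\max_{|S|=d}|\tilde a_S|$ and invoke Theorem~\ref{theorem:LOpolyRV} with the scaled target interval $[0,1/M]$, at the cost of covering it by $\lceil 1/M\rceil$ unit intervals. A suitable combination of these reductions with a partial restriction of the variables outside the ``heavy'' support of $\tilde P$ aims to reduce the general low-rank case to the high-rank regime of the previous paragraph.

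The main obstacle will be making this low-rank reduction quantitatively precise: for every $\tilde P$ of degree $d$ that depends on all $n$ variables and has rank less than $r_0$, one must exhibit a restriction under which Theorem~\ref{theorem:LOpolyRV} applies with a bound at most $1/2$. Since ordinary restriction cannot itself increase the number of disjoint heavy size-$d$ monomials (top-degree coefficients of $\tilde P$ can only be lost, not created, by fixing variables), the argument will require a careful structural analysis of how the heavy coefficients are distributed within $\tilde P$, in the spirit of the original Razborov--Viola approach. The precise quantitative dependence in the exponent $1/(d\cdot 2^{d+1})$ is exactly what pins the threshold $d\le\tfrac12\log_2\log_2 n$, as this is the largest range of $d$ for which $r_0=(2B)^{d\cdot 2^{d+1}}=n^{o(1)}$ remains achievable among $n$ variables.
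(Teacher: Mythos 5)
Your first observation and the high-rank branch are on the right track and match the intended skeleton: pass to $\pm 1$ variables, note $\{0,1\}$ is covered by two unit intervals so that $\P(P=par_n)\le 2\cdot$ (anti-concentration on a unit interval), and if $\rank(\tilde P)\ge r_0$ then Theorem~\ref{theorem:LOpolyRV} closes the case. The rest of the proposal, however, has a genuine gap that you yourself flag (``the main obstacle will be making this low-rank reduction quantitatively precise''), and the auxiliary reductions you offer do not fill it.

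Concretely, reduction (ii) is broken: if every top-degree coefficient has magnitude $M<1$, then after rescaling by $1/M$ the target set must be covered by $\lceil 1/M\rceil$ unit intervals, and $M$ can be as small as, say, $n^{-d}$, so the resulting union bound is vacuous. More importantly, you frame the low-rank case as trying to exhibit restrictions under which the polynomial gains heavy disjoint degree-$d$ monomials, and you then correctly observe that restrictions cannot create such monomials --- but that is not the purpose of the restriction in the argument of \cite{razborov2013real}. The actual mechanism is anti-parallel: fix a maximal collection of $r<r_0$ pairwise disjoint degree-$d$ monomials with nonzero coefficient; by maximality, every degree-$d$ monomial with nonzero coefficient meets one of the $\le rd$ variables they use, so restricting those variables kills \emph{all} degree-$d$ terms, producing a degree-$(d-1)$ polynomial $P'$ in the surviving $n-rd$ variables. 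Since $par_n$ restricted is $\pm par_{n-rd}$, averaging over restrictions shows $\sup_\rho \P(P'=\pm par_{n-rd})\ge \P(P=par_n)$, so one may recurse on the degree. The total number of variables killed is at most $\sum_{j\le d} j\,r_0 = O(d^2 r_0)$, which for $d\le\frac12\log_2\log_2 n$ is $n^{o(1)}$, leaving variables to play parity on; and at degree $0$ one has $\P(\text{const}=par_m)\le 1/2$ when $m\ge 1$, i.e., this is exactly your reduction~(i) in disguise. That recursion is the missing idea; without it (or something equivalent) the proposal only covers the high-rank regime. Note the paper in fact cites Theorem~\ref{theorem:RV1} as a result of \cite{razborov2013real} rather than re-proving it, and the paper's proof of the stronger Theorem~\ref{theorem:RV2} also defers this very reduction to \cite{razborov2013real}.
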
 

With Theorem \ref{theorem:LOpoly1}, we obtain the following improvement, which gets us within the Challenge by a $\log \log n$ factor.

\begin{theorem} \label{theorem:RV2} For all sufficiently large $n$, $\Cor_n (par_n, P) \le 0$ for any real polynomial $P$ of degree at most $\frac{ \log  n}{15 \log \log n }$.
\end{theorem}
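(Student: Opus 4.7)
The plan is to induct on the degree $d$, using Theorem \ref{theorem:LOpoly1} in a high-rank branch and a dimension reduction in a low-rank branch. The base case $d = 0$ is immediate since a constant polynomial agrees with $par_n$ on at most half of the inputs. For the inductive step, we start from the crude containment
$$\P(P(\xi) = par_n(\xi)) \le \P(P(\xi) \in \{0,1\}) \le \P(P(\xi) \in [-1/2, 1/2]) + \P(P(\xi) \in [1/2, 3/2]),$$
which reduces the correlation bound to two unit-interval small-ball probabilities for $P$.

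Let $R_0(P)$ denote the largest number of pairwise disjoint size-$d$ subsets $S \subseteq [n]$ with $a_S \ne 0$, an unnormalized variant of $\rank$, and fix a threshold $r_0 = r_0(d) \ge 2$ to be chosen. In the high-rank branch $R_0 \ge r_0$, select a witness family $S_1, \ldots, S_{r_0}$, set $\tau := \min_j |a_{S_j}| > 0$, and rescale to $\tilde P := P/\tau$; by construction $\rank(\tilde P) \ge r_0$. Applying Theorem \ref{theorem:LOpoly1} to $\tilde P$ on the unit intervals $[-1/2, 1/2]$ and $[1/\tau - 1/2, 1/\tau + 1/2]$ bounds $\P(P = 0) + \P(P = 1)$ by $2f(r_0, d)$, where $f$ denotes the anti-concentration bound in Theorem \ref{theorem:LOpoly1}. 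In the low-rank branch $R_0 < r_0$, the union $T := S_1 \cup \ldots \cup S_{R_0}$ of any maximal disjoint witness family has $|T| \le (r_0 - 1)d$ and meets every degree-$d$ monomial of $P$ with nonzero coefficient. So for every fixing of $\xi_T$, the restriction $P_{\xi_T}$ is a polynomial in $\xi_{[n] \setminus T}$ of degree at most $d - 1$. Since $par_n(\xi) = par_T(\xi_T) \oplus par_{[n] \setminus T}(\xi_{[n] \setminus T})$, conditioning on $\xi_T$ and case-splitting on $par_T(\xi_T)$ reduces the agreement probability to $\P(Q(\xi') = par_{n - |T|}(\xi'))$ for some $Q \in \{P_{\xi_T},\, 1 - P_{\xi_T}\}$ of degree at most $d - 1$; the inductive hypothesis on $n - |T|$ variables then bounds this by $1/2$.

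To close the recursion, take $r_0(d) = \max(d^{Kd}, C)$ for absolute constants $K, C$; a direct computation shows that $K = 8$ (with $C$ large enough to cover the small-$d$ cases where one must have $r_0 \ge 2$) makes $2f(r_0, d) \le 1/2$ via the first bound of Theorem \ref{theorem:LOpoly1}. The recursion $n(d) \le n(d-1) + r_0(d) \cdot d$ telescopes to $n(d) \le d^{Kd + O(1)}$, so $n \ge n(d)$ holds whenever $d \log d \lesssim \log n$; the choice $d \le \log n / (15 \log \log n)$ comfortably absorbs all constants. The conceptual step is the scaling $P \mapsto P/\tau$, which decouples the coefficient normalization in the definition of $\rank$ from the fixed length of the intervals $[-1/2, 1/2]$ and $[1/2, 3/2]$; the main obstacle is the parameter bookkeeping needed to confirm that $K = 8$ suffices and to absorb the $(\log \log r_0)^2$ factor should one instead invoke the second bound in Theorem \ref{theorem:LOpoly1}.
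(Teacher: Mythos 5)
Your proposal is correct and follows essentially the same route as the paper: reduce to a high-rank polynomial by the Razborov--Viola rank dichotomy (high rank $\Rightarrow$ invoke Theorem~\ref{theorem:LOpoly1} on the rescaled polynomial; low rank $\Rightarrow$ fix the variables in a maximal disjoint witness family, reduce degree, and induct using the parity split $par_n = par_T \oplus par_{[n]\setminus T}$), and the paper simply delegates this reduction to a citation of \cite[Theorem 1.1]{razborov2013real} while you spell it out. The only cosmetic quibble is the unnecessary intermediate inequality $\P(P\in\{0,1\})\le\P(P\in[-1/2,1/2])+\P(P\in[1/2,3/2])$, which sits oddly next to the rescaling step (after dividing by $\tau$ those intervals are no longer unit length); what you actually use, and what is correct, is the direct bound $\P(P=0)+\P(P=1)\le \P(\tilde P\in[-1/2,1/2])+\P(\tilde P\in[1/\tau-1/2,1/\tau+1/2])$.
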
 

\begin{proof}
Let $d$ be the degree of $P$. Following the arguments in the proof of \cite[Theorem 1.1]{razborov2013real},
we can assume that $P$ contains at least $\sqrt n$ pairwise disjoint subsets $S_i$ each of size $d$ and non-zero coefficients. It suffices to show that  the probability that $P$ outputs a boolean value is at most $1/2$. By replacing $P$ by $q(x_1, \dots, x_n) := P((x_1+1)/2, \dots, (x_n+1)/2)$, one can convert the problem into polynomial of the same degree defined on $\{\pm 1\}^{n}$, in other words, on Rademacher variables.  Then by Theorem \ref{theorem:LOpoly1}, this probability is bounded by $2B\frac{d^{4/3}\log ^{1/2}n}{n^{1/(8d+2)}}$. This is less than $1/2$ for every $d\le \frac{\log n}{15\log \log n}$ when $n$ is sufficiently large.
\end{proof}

{\bf Approximating AND/OR.} One of the main building blocks in obtaining polynomial approximations in the Hamming metric is the following result for approximating the OR function\footnote{$OR(x_1,\ldots,x_n)$ is $1$ if any of the bits $x_i$ is non-zero.}. 

\begin{claim}
For all $\epsilon \in (0,1)$ and distributions $\mu$ over $\{0, 1\}^n$, there exists a polynomial $P:\R^n \to \R$ of degree at most $O((\log n)(\log 1/\epsilon))$ such that
$\P_{x\sim \mu}(P(x) = OR(x)) > 1 - \epsilon$.
\end{claim}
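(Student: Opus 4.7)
The plan is to apply Yao's minimax principle (equivalently, a direct averaging argument): it suffices to exhibit a distribution $\mathcal{D}$ on polynomials of degree $d = O(\log n \cdot \log(1/\epsilon))$ such that for every \emph{fixed} $x \in \{0,1\}^n$,
$$\P_{P \sim \mathcal{D}}[P(x) = OR(x)] > 1-\epsilon.$$
Given such $\mathcal{D}$, swapping the order of expectations yields $\E_{P \sim \mathcal{D}}\P_{x \sim \mu}[P(x) = OR(x)] > 1-\epsilon$, so some $P$ in the support of $\mathcal{D}$ realizes the claim.

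To construct $\mathcal{D}$, I would use random subsampling at dyadic scales. Set $K = \lceil \log_2 n \rceil$ and $T = C\log(1/\epsilon)$ for an absolute constant $C$ to be fixed below. For each $\ell \in \{0,1,\dots,K\}$ and $j \in \{1,\dots,T\}$, independently draw $S_{\ell,j} \subseteq [n]$ by including each index with probability $2^{-\ell}$, and set $L_{\ell,j}(x) = \sum_{i \in S_{\ell,j}} x_i$. Then define
$$P(x) = 1 - \prod_{\ell=0}^{K}\prod_{j=1}^{T}(1 - L_{\ell,j}(x)),$$
a polynomial of degree at most $(K+1)T = O(\log n \cdot \log(1/\epsilon))$.

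For the analysis: if $x = 0$ every $L_{\ell,j}(x)$ vanishes and $P(x) = 0 = OR(x)$ deterministically. If $x \neq 0$ with Hamming weight $s = |x|$, choose $\ell^{*} = \lceil \log_2 s \rceil \in \{0,\ldots,K\}$; then $L_{\ell^{*},j}(x) \sim \mathrm{Bin}(s, 2^{-\ell^{*}})$ has mean in $[1/2, 1]$, and an elementary calculation gives
$$\P[L_{\ell^{*},j}(x) = 1] = s\,2^{-\ell^{*}}(1-2^{-\ell^{*}})^{s-1} \ge c$$
for an absolute constant $c > 0$ (using $(1-1/s)^{s-1} \ge 1/e$). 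By independence across $j$, the probability that no $L_{\ell^{*},j}(x)$ equals $1$ is at most $(1-c)^{T} < \epsilon$ once $C$ is chosen large enough; whenever some $L_{\ell^{*},j}(x) = 1$, the corresponding factor in the product vanishes and $P(x) = 1 = OR(x)$. The only nontrivial step is this binomial anti-concentration estimate, and the construction is otherwise explicit, so there is no serious obstacle---consistent with the claim being a standard stepping stone rather than a main result.
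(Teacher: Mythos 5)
The paper does not prove this claim; it is stated as a known building block, attributed to Aspnes, Beigel, Furst, and Rudich \cite{aspnes1994expressive}, and used without proof. Your proposal reconstructs the standard argument from that reference (and from the earlier duality observation already recorded in the paper's Section 2), so there is no internal proof to compare against.

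Your construction and analysis are correct. The averaging reduction from $\mu$ to a per-point guarantee is valid (this is exactly the dual view noted in the paper); the dyadic random-subset polynomial $P(x) = 1 - \prod_{\ell,j}(1 - L_{\ell,j}(x))$ has degree $(K+1)T = O((\log n)(\log 1/\epsilon))$; $P(\zero) = 0$ always holds; and for $x$ of weight $s \geq 1$ with $\ell^* = \lceil \log_2 s\rceil$ the estimate $\P[L_{\ell^*,j}(x)=1] = s\,2^{-\ell^*}(1-2^{-\ell^*})^{s-1} \geq \tfrac12 e^{-1}$ is a routine check (for $s=1$ the event is deterministic, and for $s\geq 2$ one uses $s\,2^{-\ell^*}\geq 1/2$ together with $(1-1/s)^{s-1}\geq e^{-1}$). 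The only point worth being careful about, which you handled correctly, is that the argument only needs the one-sided implication: if some factor $1 - L_{\ell^*,j}(x)$ vanishes then $P(x)=1$ regardless of what the other factors $L_{\ell,j}(x)$ take, so you never need to worry about the product taking unexpected values on the complementary low-probability event. The one stylistic note is that invoking Yao's min--max is unnecessary overhead; the direct averaging you also wrote out is all that is used, and it avoids appeals to compactness or LP duality over the (finite but large) space of degree-$d$ polynomials.
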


By iteratively applying the above claim, Aspnes, Beigel, Furst, and Rudich \cite{aspnes1994expressive} showed that $AC(0)$ circuits of depth $d$ have $\epsilon$-approximating polynomials of degree at most $O(((\log s)(\log(1/\epsilon)))^d \cdot (\log(s/\epsilon))^{d-1})$.  We prove that the following lower bound for such approximations: 

\begin{theorem}\label{th:orapprox}
There is a constant $c > 0$ and a distribution $\mu$ on $\{0, 1\}^n$ such that for any polynomial $P:\{0, 1\}^n \to \R$ of degree $d < c (\log \log n)/(\log \log \log n)$,
$$\P_{x\sim\mu}(P(x) = OR(x)) < 2/3.$$
\end{theorem}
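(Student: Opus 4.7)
The plan is to invoke Theorem \ref{th:littlewoodoffordbiasedmain} via a two-part distribution that both makes OR nontrivial to approximate and is compatible with the $p$-biased anti-concentration estimate. I would take
\[
\mu \;=\; \tfrac{1}{2}\,\delta_{0^n} \;+\; \tfrac{1}{2}\,\bigl(\mu_p^n \,\big|\, |x|\ge 1\bigr),
\]
with $p=p(n)\to 0$ slowly (say, $p=1/\log\log n$), so that (i) the atom at $0^n$ balances $\{\mathrm{OR}=0\}$ and $\{\mathrm{OR}=1\}$, preventing trivial approximators such as the constant polynomial $P\equiv 1$ or low-degree symmetric thresholders from achieving agreement above $2/3$ (these can be ruled out by a direct Stirling / CLT calculation using that $|x|$ is concentrated around $np = n/\log\log n$ under $\mu_p^n$); and (ii) the parameter $\alpha = \min(p,1-p) = p$ in Theorem \ref{th:littlewoodoffordbiasedmain} is still large enough for the rank-boosted quantity $\tilde r = (2p)^d r$ to exceed $n^{1-o(1)}$ whenever $r$ is comparable to the maximum possible rank $n/d$.

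Assume $P$ has degree $d$ and satisfies $\P_\mu(P(x)=\mathrm{OR}(x))>2/3$. The atom at $0^n$ forces $P(0^n)=0$ (else $P$ errs with probability at least $1/2$), so $P$ has no constant term. Since $\mathrm{OR}\equiv 1$ on the support of the conditional component, the approximation hypothesis also forces
\[
\P_{x\sim\mu_p^n}\bigl(P(x)=1\bigr) \;\ge\; \Omega(1).
\]
The event $\{P=1\}$ lies in the unit-length interval $(1/2,3/2)$, so Theorem \ref{th:littlewoodoffordbiasedmain} with $\alpha = p$ yields
\[
\Omega(1) \;\le\; \P_{\mu_p^n}\!\bigl(P\in(1/2,3/2)\bigr) \;\le\; \frac{B\,d^{4/3}\sqrt{\log\tilde r}}{\tilde r^{\,1/(4d+1)}}.
\]
For $d<c\log\log n/\log\log\log n$ and $\rank(P)\ge n^{1-o(1)}$ we get $\tilde r\ge n^{1-o(1)}$, and the right-hand side is then at most $n^{-\Omega(\log\log\log n/\log\log n)}$, giving the required contradiction.

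The main obstacle is the case where $\rank(P)$ is small, for which Theorem \ref{th:littlewoodoffordbiasedmain} is vacuous. I would handle this by induction on $d$. If $\rank(P) < r^\ast$ for a threshold $r^\ast = o(n/d)$, then by a greedy covering argument every heavy degree-$d$ monomial of $P$ uses a variable from a small core $U\subseteq[n]$ with $|U|=O(dr^\ast)$. Sampling the $U$-variables from $\mu_p^n$ and averaging then produces, with constant probability, a polynomial $P'$ on $T=[n]\setminus U$ whose degree-$d$ part is negligible, reducing to an essentially degree-$(d-1)$ polynomial that still approximates OR over the induced distribution on $\{0,1\}^T$. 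The base case $d=O(1)$ is handled directly by the symmetric polynomial / CLT-on-the-slice analysis. Calibrating $p$, $r^\ast$, and the shrinkage of $n$ at each inductive step so that $n^{1-o(1)}$ variables survive after $d$ levels --- which is what bounds $d$ by $O(\log\log n/\log\log\log n)$ --- is the delicate quantitative step and the source of the gap between this lower bound and the $O(\log n)$ upper bound from Aspnes--Beigel--Furst--Rudich.
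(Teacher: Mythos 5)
Your proposal correctly identifies the two main ingredients---an atom at $0^n$ forcing $P(0^n)=0$, and the $p$-biased anti-concentration Theorem \ref{th:littlewoodoffordbiasedmain} used to show $\rank(P)$ must be small---but a single-bias distribution cannot support the inductive step you sketch, and closing that gap is precisely what the paper's multi-bias construction is for.

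Concretely: once you conclude $\rank(P) \le r^* \approx d^{O(d)}/p^d$, the vertex cover $U$ of the heavy degree-$d$ monomials has $|U| = O(d\, r^*) = d^{O(d)}/p^d$. To keep the constraint $P(0^n)=0$ available for the next step of the recursion you must set $x_U = 0$; but under $\mu_p^n$, $\P(x_U = 0) = (1-p)^{|U|} \approx \exp\left(-d^{O(d)}/p^{d-1}\right)$, which is vanishingly small, so conditioning on it destroys the $\Omega(1)$ agreement probability you need at the next level. If instead you simply sample $x_U \sim \mu_p^U$ and average, then on the event $x_U \neq 0$ the function $OR$ is identically $1$ on the remaining coordinates, the restricted polynomial $P'$ no longer carries the boundary condition $P'(0)=0$, and the constant polynomial $P' \equiv 1$ approximates $OR$ perfectly on the mass of the conditional distribution away from the origin---no contradiction follows, no matter how small the degree-$d$ part of $P'$ is. Your proposal does flag that the fixing ``to $0$'' is delicate, but with one bias $p$ there is simply no way to make it happen with non-negligible probability.

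The paper resolves this by putting mass on $D = \Theta(\log\log n / \log a)$ doubly-exponentially decreasing biases $p_i = 2^{-a^i}$, rather than one bias. After extracting the core $S_1$ at level $p_1$, the argument drops to the next level $p_2 \le p_1^a$ with $a \ge 2d$, so that $\P_{\mu_{p_2}}\!\left(x_{S_1}\neq 0\right) \le |S_1|\cdot p_2 \le d^{O(d)} p_1^{d}$ is vanishing; hence $P_1 = P_{S_1\to 0}$ still approximates $OR$ at bias $p_2$ up to negligible error, $P_1(0^n)=0$ is inherited, the degree drops, and the recursion continues. The final bound $d = \Omega(\log\log n /\log\log\log n)$ emerges by comparing the $\Omega(D)$ available bias levels against the at-most-$d$ degree-reduction steps, with $a = \Theta(d\log d)$ chosen so that the per-step error $d^{O(d)}2^{-a}$ stays small. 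Your single bias $p = 1/\log\log n$ has no analogue of this decreasing-bias escape, and the CLT-on-the-slice base case does not compensate for it; the proof does not close as written.
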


To the best of our knowledge no $\omega(1)$ lower bound was known for approximating the OR function.
We give an explicit distribution (directly motivated by the upper bound construction in \cite{aspnes1994expressive}) under which OR has no $1/3$-error polynomial approximation. 
The distribution $\mu$ on $\{0, 1\}^n$ we consider is as follows: 
\begin{enumerate}
\item With probability $1/2$ output $x =0$.
\item With probability $1/2$ pick an index $i \in [D]$ uniformly at random and output $x \gets \mu_{2^{-a^i}}^n$ for some suitably chosen parameters $a, D$. 
\end{enumerate}
The analysis then proceeds at a high level as in the lower bound for parity. However, we need some extra care with the inductive argument as unlike for parity, we can't consider arbitrary fixings of subsets of coordinates of the OR function. We get around this hurdle by instead only considering fixing parts of the input to $0$ and decreasing the bias $p$ to make sure that these coordinates are indeed set to $0$ with high probability.  The details are defered to Section 7.

\subsection{The number of small subgraphs  in a random graph} 

Consider the Erd\H{o}s-R\'enyi random graph $G(N,p)$. Let $H$ be a small fixed graph (a triangle or $C_4$, say). The problem of counting the number of copies of $H$ in $G(N,p)$ is 
a fundamental topics in the theory of random graphs (see, for instance, the text books   \cite{Bollobasbook, Jansonbook}).  In fact,  one can talk 
about a more general problem of counting the number of copies of $H$ in a random subgraph of any  deterministic graph $G$ on $N$ vertices, formed by choosing each edges of $G$ with probability $p$.
  We denote the $F(H,G,p)$ this random variable.  In this setting we understand that $H$ has constant size, and the size of $G$ tends to infinity. 

It has been noticed  that $F$ can be written as a polynomial  in term of the edge-indicator random variables.
For example, the number of $C_4$ (circle of length $4$)  is 

$$ \sum_{ i,j,k,l  } \xi_{ij} \xi_{jk} \xi_{ kl} \xi_{li}  $$ where the summation is over all quadruple $ijkl$  which forms a $C_4$ in $G$   and the Bernoulli random variable 
 $\xi_{ij}$ represents the edge $ij$.  Clearly, any  polynomial of this type  has  $n = e(G)$ 
iid Bernoulli $p$-bias variables $\xi_{ij}$, and its degree equals the number of edges of $H$.  The rank $r$ of $F$ is exactly the size of the largest collection of edge disjoint copies of 
$H$ in $G$.  

The polynomial representation has been useful in proving {\it concentration} (i.e.{\it large deviation} ) results for $F$ 
(see \cite{KimVu, Vusurvey}, for instance).  Interestingly, it has turned out that one can also use this to derive anti-concentration result, in particular bounds on the probability that 
the random graph has exactly $m$ copies of $H$.

By  Theorem \ref{th:littlewoodoffordbiasedmain}, we have


\begin{corollary}  \label{Hcopy} 
Assume that $p$ is a constant in $(0,1)$. Then for fixed $H$ and any integer $m$ which may depend on $G$ 

$$\P ( F(H,G ,p) = m )  \le r^{-1/2 +o(1)} , $$  where $r$ is the size of the largest collection of edge-disjoint copies of $H$ in $G$.  In particular, if $G=K_n$, then 

$$\P ( F(H,K_n ,p) = m )  \le n^{-1/2 +o(1)}. $$  
\end{corollary}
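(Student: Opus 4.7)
The plan is to derive the corollary as a direct application of Theorem~\ref{th:littlewoodoffordbiasedmain}. The first step is to write $F(H,G,p)$ in the form \eqref{form}: indexing variables by the edges of $G$, one has
$$F(H,G,p) = \sum_{S \subseteq E(G)} a_S \prod_{e \in S} \xi_e,$$
where $\xi_e \sim \mu_p$ are iid and $a_S$ is the number of copies of $H$ in $G$ whose edge set is exactly $S$ (so $a_S$ is a nonnegative integer, nonzero precisely when $S$ spans a copy of $H$). This is a multilinear polynomial of degree $d := e(H)$ in $n := e(G)$ variables, and $F$ takes values in $\Z_{\ge 0}$, so the event $\{F = m\}$ is exactly $\{F \in (m-1/2,\,m+1/2)\}$, an interval of length $1$.

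The second step is to identify the rank. A family of $k$ pairwise disjoint subsets $S_1,\dots,S_k \subseteq E(G)$ of size $d$ with $|a_{S_j}| \ge 1$ for each $j$ is precisely a family of $k$ pairwise edge-disjoint copies of $H$ in $G$, so $\rank(F) = r$, matching the combinatorial $r$ in the corollary statement.

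Now apply Theorem~\ref{th:littlewoodoffordbiasedmain}. Since $H$ is fixed and $p \in (0,1)$ is a constant, both $d$ and $\alpha := \min(p, 1-p)$ are constants, so $\tilde r := 2^d \alpha^d r = \Theta(r)$. For sufficiently large $r$ the hypothesis $\tilde r \ge 3$ holds (for bounded $r$ the claimed bound $r^{-1/2+o(1)}$ is vacuous). The second of the two bounds in the theorem then yields
$$\P(F(H,G,p) = m) \le \frac{\exp(B d^2 (\log \log \tilde r)^2)}{\sqrt{\tilde r}} = r^{-1/2 + o(1)},$$
because for constant $d$ the numerator is $r^{o(1)}$ and $\tilde r = \Theta(r)$.

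For the special case $G = K_n$, the only additional input is the easy combinatorial fact that $r = \Omega(n^2)$: $K_n$ contains $\Theta(n^{v(H)})$ copies of $H$, any fixed edge lies in only $O(n^{v(H)-2})$ of them, so repeatedly choosing a copy and deleting its edges yields $\Omega(n^2)$ pairwise edge-disjoint copies. Hence $r^{-1/2+o(1)} = n^{-1+o(1)} \subseteq n^{-1/2+o(1)}$. There is essentially no obstacle: all of the substance lies in Theorem~\ref{th:littlewoodoffordbiasedmain}, and the only bookkeeping is (i)~that the coefficients $a_S$ are positive integers, which gives both $|a_S| \ge 1$ when nonzero and the fact that $\{F = m\}$ is a length-$1$ interval event, and (ii)~that the constants depending on $d$ and $p$ get absorbed into the $o(1)$ term in the exponent.
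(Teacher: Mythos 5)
Your proposal is correct and follows exactly the route the paper intends (the paper simply says ``By Theorem~\ref{th:littlewoodoffordbiasedmain}, we have'' and states the corollary without spelling out the details): write $F$ as a degree-$e(H)$ multilinear polynomial in the edge indicators, identify its rank with the maximum number of edge-disjoint copies of $H$, observe that $\{F=m\}$ is a length-$1$ interval event since $F$ is integer-valued, and invoke the second bound of the $p$-biased theorem with $d$ and $\alpha$ constant so that the prefactor is $r^{o(1)}$. Your observation that for $K_n$ one in fact has $r=\Theta(n^2)$ and hence the stronger bound $n^{-1+o(1)}$ (which trivially implies the stated $n^{-1/2+o(1)}$) is also correct.
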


A similar argument can be used to deal with the number of {\it induced} copies of $H$, which can be also written as a polynomial with 
degree at most ${v \choose 2}$, with $v$ being the number of vertices of $H$.    Details are left out as an exercise. 

Finally, let us mention that in a recent paper \cite{GK}, Gilmer and Kopparty  obtained a precise estimate for $\P( F(H, K_n, p) =m)$ in the case when $H$ is a triangle. \footnote{We would like to thank J. Kahn for pointing out this reference.} Their approach relies on a  careful treatment of the characteristic function. It remains to be seen if this method  applies to our more general setting.

\section {Regular polynomials}\label{reg-pol}

Our proofs  of anti-concentration bounds use the techniques developed in the context of bounding the \textit{noise sensitivity} of \textit{polynomial threshold functions} in the works \cite{diakonikolas2014regularity, harsha2014bounding, kane2014pseudorandom}. In particular, we use the concept of \textit{regular polynomials}, the invariance principle of Mossel, O'donnell, and Oleszkiewicz \cite{mossel2010noise}, and the \textit{regularity lemma} of \cite{diakonikolas2014regularity, harsha2014bounding}.  In this and the following section, we discuss these tools.



To start,  we  define  regular polynomials and discuss an anti-concentration result for them. The \textit{influence} of the $i$-th variable on $P$ is defined to be $\Inf_i = \Inf_i(P)=\sum_{i\in S} a_S^{2}$. Since $ \Var(P) = \sum_{S\neq \emptyset} a_S^{2}$, we have 
\begin{equation}
\Var(P)\le \sum_{i=1}^{n} \Inf_i \le d\Var(P).\label{total_inf}
\end{equation}
Assume the random variables are ordered such that $ \Inf_1\ge  \Inf_2\ge\dots \ge\Inf_n$. 
Let $ \tau>0 $, the $ \tau$-\textit{critical index} of $ P $ is the least $ i $ such that $ \Inf_{i+1}\le \tau\sum_{j=i+1}^{n}\Inf_j$. If it does not hold for any $i$, we say that the $P$ has $\tau$-critical index $\infty$. If $ P $ has $ \tau $-critical index 0, we say that $ P $ is $ \tau $-regular.
The following is a corollary of strong results from \cite{carbery2001distributional} and \cite{mossel2010noise}. 

\begin{proposition} \label{regular}
	Let $P$ be a non-constant polynomial of the form \ref{form}. Let $\tau> 0$. If $ P $ is $ \tau $-regular, then $ \P(|P(\xi_1, \dots, \xi_n)|\le \alpha)\le \frac{Cd\alpha^{1/d}}{(\Var(P))^{1/2d}}+  Cd\tau^{1/(4d+1)}$ for every $ \alpha>0 $.
\end{proposition}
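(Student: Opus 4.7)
The plan is to derive the bound by combining the Gaussian Carbery--Wright inequality (Theorem \ref{thm:CW}) with the invariance principle of Mossel, O'Donnell and Oleszkiewicz \cite{mossel2010noise}. Let $G_1,\dots,G_n$ be iid standard Gaussians. Carbery--Wright applied to $P(G_1,\dots,G_n)$ immediately yields
$$\Pr\bigl(|P(G_1,\dots,G_n)|\le \alpha\bigr)\le \frac{Cd\,\alpha^{1/d}}{\Var(P)^{1/2d}},$$
so it suffices to show that, when $P$ is $\tau$-regular of degree $d$, the laws of $P(\xi)$ and $P(G)$ lie within Kolmogorov distance $C'd\,\tau^{1/(4d+1)}$.

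For this second step I would use a standard mollification argument. Fix a scale $\eta>0$ and a smooth surrogate $\psi_\eta$ for $\mathbf{1}_{[-\alpha,\alpha]}$ satisfying $\|\psi_\eta^{(k)}\|_\infty \lesssim \eta^{-k}$ for the few derivatives that are needed. Then
$$\Pr(|P(\xi)|\le\alpha)\le \E\,\psi_\eta(P(\xi)) \le \Pr(|P(G)|\le \alpha+\eta) + \bigl|\E\,\psi_\eta(P(\xi))-\E\,\psi_\eta(P(G))\bigr|.$$
The first summand is controlled by applying Carbery--Wright to two thin Gaussian intervals around $\pm\alpha$, which contributes an error of size $O\bigl(d(\eta/\Var(P)^{1/2})^{1/d}\bigr)$ on top of $\Pr(|P(G)|\le\alpha)$. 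The second summand is bounded by the MOO invariance principle for $\tau$-regular polynomials: it trades a $k$-th derivative bound $\eta^{-k}$ for a gain of $\tau^{\Omega(k/d)}$, at the cost of a multiplicative factor that is exponential in $d$.

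Balancing these two errors by optimizing $\eta$ in terms of $\tau$ is the main computation: the smoothing error grows like $\eta^{1/d}$ while the invariance error decays like $\eta^{-k}\tau^{k/(2d)}$, and the choice of $\eta$ that equates these contributions produces precisely the exponent $1/(4d+1)$ together with an overall factor $Cd$. The principal obstacle is the explicit $d$-dependence: the hypercontractive estimates at the heart of the invariance principle cost a factor that is exponential in $d$, and one must verify that, after raising to the optimal power of $\tau$, this factor is absorbed into the $Cd$ prefactor rather than corrupting the $\tau$-exponent. Once this bookkeeping is carried out, summing the Gaussian (smoothing) and non-Gaussian (invariance) contributions yields the claimed two-term bound.
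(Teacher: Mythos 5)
Your plan is essentially the paper's proof: both steps are identical, namely the Gaussian small-ball bound from Carbery--Wright and a transfer from Gaussian to Rademacher inputs via the Mossel--O'Donnell--Oleszkiewicz invariance principle. The only difference is that the paper directly cites the pre-packaged anti-concentration form of the invariance principle, \cite[Theorem 3.19]{mossel2010noise}, applied under Hypothesis H4 with $r=4$, which already delivers the error term $Cd\,\tau^{1/(4d+1)}$ in one step; you instead propose to re-derive that statement from the smooth-test-function form of the invariance principle by mollifying the indicator and balancing the smoothing error against the invariance error. That is precisely how MOO obtain Theorem~3.19 themselves, so you are re-proving a lemma the paper treats as a black box rather than taking a genuinely different route, and the balancing and $d$-bookkeeping you leave implicit are exactly the content of that cited theorem.
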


\begin{proof}

Let $\tilde \xi_1, \dots, \tilde \xi_n$ be independent standard Gaussian variables. Notice that $$\Var(P(\xi_1\, \dots, \xi_n)) = \Var(P(\tilde \xi_1, \dots, \tilde \xi_n)).$$

Our settings satisfy the Hypothesis {\bf H$4$} of \cite[Theorem 3.19]{mossel2010noise} with $r = 4$. Using that theorem, one obtains
\begin{eqnarray}\label{moo}
\P(|P(\xi_1, \dots, \xi_n)|\le \alpha)&\le& \P(|P(\tilde \xi_1, \dots,\tilde \xi_n)|\le \alpha) + Cd\tau^{1/(4d+1)}.
\end{eqnarray}  

Now, for Gaussian case, it was proved in \cite[Theorem 8]{carbery2001distributional} that for every $ \alpha>0 $,
	\begin{equation}\label{gau}
	\P(|P(\tilde{\xi}_1, \dots, \tilde{\xi_n})|\le \alpha)\le C\frac{d\alpha^{1/d}}{(\Var (P))^{1/2d}}.
	\end{equation}

Combining \eqref{moo} and \eqref{gau}, we get the desired bound.
\end{proof}

\section{ A regularization  lemma}\label{reg-lmm}

Proposition \ref{regular} would yield our desired bound in Theorem \ref{theorem:LOpoly1}  if $\tau$ is small (say at most $r^{-1}$). However, there is
no guarantee for this assumption.  In order to go from the regular case to the general case, we will use the following regularization  lemma, whose proof is a slight modification of \cite[Theorem 1.1]{diakonikolas2014regularity} (the version below gives us  better quantitative bounds in our applications). The main idea is to condition on the random variables with large influence. With high probability, the resulting polynomial is either regular or dominated by its constant part.

For a set $S\subset [n]$, we consider a random assignment $\rho\in \{\pm 1\}^{|S|}$ which assigns values $\pm 1$ to variables $(\xi_i)_{i\in S}$. We say that ``$\rho$ fixes $S$". For each such $\rho$, the polynomial $P$ becomes a polynomial of $(\xi_{i})_{i\notin S}$ which is denoted by $P_{\rho}$. We write $P_{\rho} = P^{*}(\rho) + q_{\rho}(\xi_i)_{i\notin S}$ where $P^*$ is the constant part of $P_{\rho}$ consisting of monomials of $(\xi_i)_{i\in S}$ only. For $C>0$ and $0<\beta<1$, we say that $P_{\rho}$ is \textit{$(C,\beta)$-tight} if 
\begin{equation}
\sqrt{\Var_{(\xi_{i})_{i\notin S}} (q_{\rho})} \le |P^*(\rho)|\left(C(\log\frac{1}{\beta})\right)^{-d/2},\label{q1}
\end{equation}
and
	\begin{equation}
	\P_{(\xi_{i})_{i\notin S}}\left (|q_\rho|\le \frac{1}{2}|P^*(\rho)|\right )\ge 1-\beta\label{qp*1}.
	\end{equation} 
Note that it is always true that $\E_{(\xi_{i})_{i\notin S}} q_{\rho} = 0$. We shall see later that \eqref{q1} actually implies \eqref{qp*1}.
	
\begin{proposition}\label{regularity lemma}
	There exist absolute constants $C$ and $C'$ such that the following holds true. Let $P(\xi_1, \dots, \xi_n)$ be a a degree-$d$ polynomial, let $0<\tau, \beta<\frac{1}{3}$. Let $\alpha = C(d\log\log 1/\beta+d\log d)$ and $\tau' = (C'd\log d\log\frac{1}{\tau})^{d}\tau$. Let $M\in \N$ such that $M\frac{\alpha}{\tau}\le n$. Then, there exists a decision tree of depth at most $M\frac{\alpha}{\tau}$ with $P$ at the root, variables $\xi_i$'s at each internal node, and a degree-$d$ polynomial $P_{\rho}$ at each leaf $\rho$, with the following property: with probability at least $1 - (1-\frac{1}{2C^{d}})^{M}$, a random path from the root $P$ reaches a leaf $\rho$ such that $P_{\rho}$ is either $\tau'$-regular or \textit{$(C,\beta)$-tight}.

\end{proposition}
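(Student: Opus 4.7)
The plan is to follow the iterative strategy of Diakonikolas--Servedio--Tan \cite{diakonikolas2014regularity} and Harsha--Klivans--Meka \cite{harsha2014bounding}: build the decision tree in $M$ successive rounds. Each round, given the current restricted polynomial $Q$, either certifies $Q$ as $\tau'$-regular or $(C,\beta)$-tight (a leaf of the tree) or branches on at most $\alpha/\tau$ further variables and recurses. I will design each round so that, conditionally on the history, the probability of producing a good leaf is at least $1/(2C^{d})$. Summing contributions gives tree depth $\le M\alpha/\tau$, and the conditional independence across rounds gives failure probability $\le (1-1/(2C^{d}))^{M}$.

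\textbf{The single round.} If $Q$ is already $\tau'$-regular we are done. Otherwise, let $k$ be the $\tau$-critical index of $Q$, capped at $\alpha/\tau$. I branch on the top-$k$ influence variables $\xi_{1},\dots,\xi_{k}$, write $Q_{\rho}=Q^{*}(\rho)+q_{\rho}$ as in the definition of tightness, and then analyse $\rho$. The cap $k\le\alpha/\tau$ with $\alpha=C(d\log d+d\log\log(1/\beta))$ is motivated by the following observation: letting $V_{i}:=\sum_{j\ge i}\Inf_{j}(Q)$, the critical-index condition forces $V_{i}\ge(1+\tau)V_{i+1}$ for $i<k$, so the tail variance decays geometrically in $k$. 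Once $k$ exceeds $\alpha/\tau$, the top $\alpha/\tau$ coordinates carry essentially all of $\Var Q$, which will force the tight case in the next step.

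\textbf{Paley--Zygmund via hypercontractivity.} The key probabilistic input is that with decent probability the constant part $Q^{*}(\rho)$ is comparable to its $L^{2}$-norm. The $(4,2)$-hypercontractive Bonami--Beckner inequality on the Rademacher cube gives $\|Q^{*}\|_{4}\le c^{d}\|Q^{*}\|_{2}$ for an absolute $c$, and hence, by Paley--Zygmund,
\[
\Pr_{\rho}\!\bigl(|Q^{*}(\rho)|\ge \tfrac12\|Q^{*}\|_{2}\bigr)\;\ge\;\frac{1}{2C^{d}}
\]
for a suitable absolute $C$. Condition on this event. If $\sqrt{\Var q_{\rho}}\le|Q^{*}(\rho)|\,(C\log(1/\beta))^{-d/2}$, then \eqref{q1} holds; the companion bound \eqref{qp*1} then follows from the standard Bonami--Beckner tail bound $\Pr(|q_{\rho}|>t\|q_{\rho}\|_{2})\le\exp(-\Omega(t^{2/d}))$ for degree-$d$ multilinear polynomials, applied with $t=(C\log(1/\beta))^{d/2}$. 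Hence $Q_{\rho}$ is $(C,\beta)$-tight. Otherwise $\sqrt{\Var q_{\rho}}$ dominates $\|Q^{*}\|_{2}$, so $\Var Q_{\rho}\asymp\Var q_{\rho}$; since we have removed the $k$ largest-influence variables, every remaining influence is at most $\tau V_{k+1}\lesssim \tau\Var q_{\rho}$, which after the loss incurred in passing from $Q$ to $q_{\rho}$ yields $\tau'$-regularity of $Q_{\rho}$.

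\textbf{Main obstacle.} The delicate point is the calibration of $\tau'$ relative to $\tau$. Conditioning on the top-$k$ variables can simultaneously (a) shrink $\Var q_{\rho}$ below $V_{k+1}$ by a factor depending on $d$ and $\log(1/\tau)$, (b) inflate the remaining influences as a fraction of this smaller variance, and (c) compound these losses when we compare $\|Q^{*}\|_{2}$ to $\sqrt{\Var q_{\rho}}$ along the path where hypercontractivity is applied. All of these multiplicative losses must be swallowed by the single factor $(C'd\log d\log(1/\tau))^{d}$ appearing in $\tau'=(C'd\log d\log(1/\tau))^{d}\tau$. The hypercontractivity bound and the geometric-decay-of-influences argument are the conceptual backbone; tracking the constants and powers of $d$ and $\log(1/\tau)$ cleanly through all three effects is the main technical work, with the rest being bookkeeping on the tree.
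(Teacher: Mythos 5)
Your high-level strategy matches the paper: build the tree in $M$ rounds, argue a $\frac{1}{2C^d}$ lower bound on the single-round success probability, use hypercontractivity/Paley--Zygmund for the tight case, and bound tree depth by $M\alpha/\tau$. But your single-round argument has a genuine logical gap in how it conflates the two ways a leaf can be good.

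You condition on the Paley--Zygmund event $\{|Q^*(\rho)|\ge\tfrac12\|Q^*\|_2\}$ and then assert a dichotomy: if the variance inequality \eqref{q1} holds you get tightness, and otherwise ``since we have removed the $k$ largest-influence variables, every remaining influence is at most $\tau V_{k+1}$,'' hence $\tau'$-regularity. This fails for two reasons. First, when the true critical index exceeds $\alpha/\tau$ and you cap $k=\alpha/\tau$, the inequality $\Inf_{k+1}(Q)\le\tau V_{k+1}$ is \emph{false} by the definition of critical index, so the ``otherwise regular'' branch is unavailable in exactly the case where you rely on the cap. Second, even when $k$ equals the (small) critical index so that the bound does hold for $Q$, it bounds influences of $Q$, not of $q_\rho$: a restriction $\rho$ can substantially inflate the influence of a remaining variable on $q_\rho$ relative to $\Var(q_\rho)$. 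Controlling that growth is a probabilistic statement that holds only for most $\rho$ (this is precisely DSTW's Lemma 3.9, which the paper invokes as its Lemma \ref{small critical index}), not a deterministic consequence of the Paley--Zygmund event. So the sets $\{\rho:\text{PZ holds}\}$, $\{\rho:\text{tight}\}$, and $\{\rho:\text{regular}\}$ need not be nested, and ``PZ $\wedge$ not tight $\Rightarrow$ regular'' cannot be established leaf-by-leaf.

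The paper closes this gap by splitting on the critical index \emph{before} analyzing $\rho$. If the critical index is $\ge\alpha/\tau$, it restricts $\alpha/\tau$ variables and proves tightness holds for a $\frac{1}{2C^d}$ fraction of $\rho$ by intersecting the Paley--Zygmund event (Lemma \ref{large critical index}, via the Austrin--Dinur anti-concentration theorem) with the high-probability event that $\Var(q_\rho)$ is small (via Bonami--Beckner tail bounds) --- an intersection bound, not a pointwise implication. If the critical index $k<\alpha/\tau$, it restricts $k$ variables and invokes Lemma \ref{small critical index} (DSTW Lemma 3.9) to get $\tau'$-regularity for a $\frac{1}{2C^d}$ fraction; Paley--Zygmund plays no role there. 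You should adopt this case split rather than the unified dichotomy; once you do, the rest of your sketch (the $M$-round recursion, the geometric failure probability, the depth bound) goes through as you describe.
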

\begin{proof}
	
	First, we consider the case when the $\tau$-critical index of $P$ is large. For a positive integer $K$, denote by $[K]$ the set $\{1, \dots, K\}$.
	\begin{lemma}\label{large critical index}
		There exists a constant $C$ such that the following holds true. Let $0<\tau, \beta<\frac{1}{3}$ be deterministic constants that may depend on $n$. Suppose that $P$ has $\tau$-critical index at least $K = \frac{\alpha}{\tau}$, where $\alpha = C(d\log\log 1/\beta+d\log d)$. Then for at least $\frac{1}{2C^{d}}$ fraction of restrictions $\rho$ fixing $[K]$, the polynomial $P_\rho$ is $(C, \beta)$-tight.
	\end{lemma}
	Roughly speaking, the $(C,\beta)$-tightness asserts that the resulting polynomial $P_{\rho}$ has large constant term, compared to the random part, and therefore, it concentrates around the constant part.
	\begin{proof}
		Since the proof is completely the same as the proof of \cite[Lemma 3.5]{diakonikolas2014regularity}, we only provide a sketch here. 
		Without loss of generality, assume that $\Var(P) = 1$.
		We first show that 
		\begin{equation}
		\P_{\rho}(|P^*(\rho)|\ge \frac{1}{2C^{d}})\ge \frac{1}{C^{d}}\label{lmm4DSTW}
		\end{equation}
		where by $\P_\rho$ we mean the probability with respect to $\xi_1, \dots, \xi_K$.
		Observe that $\Var_{\rho}(P^*(\rho))=\sum_{\emptyset\neq S\subset [K]} a_S^{2}\le \Var(P)=1$. Moreover, by definition of critical index, 
		\begin{equation}
		\sum_{i\notin [K]} \Inf_i(P)\le(1-\tau)^{K}\sum_{i=1}^{n}\Inf_i(P)\le de^{-\alpha}\le \frac{1}{2}.\label{suminf}
		\end{equation}
		 Hence, $1\ge \Var_{\rho} (P^*(\rho)) = \Var(P) - \sum_{S\subset [n], S\nsubseteq [K]} a_S^{2}\ge 1 - \sum_{i\notin [K]}\Inf_i(P)\ge\frac{1}{2}$. Then, we use the following Theorem
		\begin{theorem}(\cite{austrin2011randomly}, \cite{dinur2006fourier}, also \cite[Theorem 2.5]{diakonikolas2014regularity}) There is a universal constant $C_0>1$ such that for any non-zero degree-$d$ polynomial $P: \{-1, 1\}^{n}\to \R$ with $\E (P) = 0$, we have 
			$$\P\left (P>\frac{\sqrt{\Var(P)}}{C_0^{d}}\right )>\frac{1}{C_0^{d}}.$$
		\end{theorem}
		
		Let $C\ge C_0^{2}$. Applying the above Theorem to $P^*(\rho) - \E_{\rho}P^*(\rho)$ if $\E_{\rho}P^*(\rho)\ge 0$ and $-P^*(\rho) +\E_{\rho}P^*(\rho)$ otherwise gives \eqref{lmm4DSTW}.

		Next, we show that 
		\begin{equation}
		\P_{\rho}\left (\Var (q_{\rho})>\frac{1}{(2C^{d})^{2}}\left(C(\log\frac{1}{\beta})\right)^{-d}\right )\le \frac{1}{2C^{d}}.\label{lmm5DSTW}
		\end{equation}
		Indeed, let $Q(\rho) = \Var (q_{\rho})$. By triangle inequality and Bonami-Beckner inequality (see, for instance, \cite[Theorem 2.1]{diakonikolas2014regularity}, or \cite{bonami1970etude}, \cite{gross1975logarithmic}), one can show that $||Q(\rho)||_{2} = \sqrt{\E_{\rho} Q^{2}(\rho)} \le 3^{d}\sum_{i>K}\E_{\rho}\Inf_{i}(P_\rho)= 3^{d}\sum_{i> K}\Inf_i(P)\le 3^{d}de^{-\alpha}$ where the last inequality is just \eqref{suminf}. From this, we use the following Theorem
		\begin{theorem}(\cite{austrin2011randomly}, \cite{dinur2006fourier}, also \cite[Theorem 2.2]{diakonikolas2014regularity})\label{thm6}
			Let $P:\{-1, 1\}^{n}\to \R$ be a degree-$d$ polynomial. For any $t>e^{d}$, we have
			\begin{equation}
			\P(|P|> t||P||_2)\le \exp(-\Omega(t^{2/d})).\nonumber
			\end{equation}	
		\end{theorem}  
Using this Theorem for the polynomial $Q$ and $t = d^{d}C^{d}\log^{d}C$, we get \eqref{lmm5DSTW}.		

From \eqref{lmm4DSTW} and \eqref{lmm5DSTW}, with probability at least $\frac{1}{2C^{d}}$ over all possible $\rho$, \eqref{q1} happens. For each such $\rho$, using Theorem \ref{thm6} for $q$, we obtain
		\begin{eqnarray}
		\P_{\xi_{K+1}, \dots, \xi_n}(|q_{\rho}|\ge \frac{1}{2}|P^*(\rho)|)\le \P_{\xi_{K+1}, \dots, \xi_n}\left (|q_{\rho}|\ge \frac{1}{2}\left (C\log\frac{1}{\beta}\right )^{d/2}||q_{\rho}||_2\right )\le \beta,\nonumber
		\end{eqnarray}
		which gives \eqref{qp*1} and completes the proof of Lemma \ref{large critical index}.
\end{proof}

	Next, we consider the case when $P$ has small critical index. We'll use the following Lemma \cite[Lemma 3.9]{diakonikolas2014regularity} which asserts that by assigning values to the random variables with large influences, with significant probability, one gets a regular polynomial.
	\begin{lemma}\label{small critical index}
Let $C$ be the constant in Lemma \ref{large critical index}. There exists an absolute constant $C'$ such that the following holds. Let $0<\tau<\frac{1}{3}$. Assume that $P$ has $\tau$-critical index $k\in [n]$. Let $\rho$ be a random restriction fixing $[k]$, and $\tau' = (C'd\log d\log\frac{1}{\tau})^{d}\tau$. With probability at least $\frac{1}{2C^{d}}$ over the choice of $\rho$, the restricted polynomial $P_\rho$ is $\tau'$-regular.
	\end{lemma} 
	
	Combining Lemmas \ref{large critical index} and \ref{small critical index}, we get
	\begin{lemma}\label{tree}
		Let $P(\xi_1, \dots, \xi_n)$ be a a degree-$d$ polynomial, $0<\tau, \beta<\frac{1}{3}$. Let $\alpha = C(d\log\log 1/\beta+d\log d)$ and $\tau' = (C'd\log d\log\frac{1}{\tau})^{d}\tau$. Assume that $\Inf_1\ge \Inf_2\dots\ge\Inf_n$. Then one of the following holds true.
		\begin{enumerate}
			\item $P$ is $\tau$-regular.
			\item The $\tau$-critical index of $P$ is at least $\frac{\alpha}{\tau}$ and the conclusion of Lemma \ref{large critical index} holds.
			\item The $\tau$-critical index of $P$ is $k<\frac{\alpha}{\tau}$ and the conclusion of Lemma \ref{small critical index} holds.
		\end{enumerate}
	\end{lemma}

	Now, we are ready for the proof of Proposition  \ref{regularity lemma}. The strategy is to apply Lemma \ref{tree} repeatedly $M$ times. At first, if $P$ is not $\tau$-regular, we apply Lemma \ref{tree} to $P$ and obtain an initial tree of depth at most $\frac{\alpha}{\tau}$. We know that at least $\frac{1}{2C^{d}}$ fractions of the restricted $P_{\rho}$ are "good", i.e., either $\tau'$-regular or $(C, \beta)$-tight. We keep them as leaves of our final tree and leave them untouched during the next stages. At the second stage, for each of the remaining "bad" polynomials $P_{\rho}$, we order the unrestricted variables in decreasing order of their influences in $P_{\rho}$, and then apply lemma \ref{tree} to it. Note that probability of reaching a bad leaf in this second tree is at most $(1-\frac{1}{2C^{d}})^{2}$. Continuing in this manner $M$ times, we get the desired tree and complete the proof of Theorem \ref{regularity lemma}.
\end{proof}

\section {Proof of Theorem \ref{theorem:LOpoly1}}\label{main-proof}

The high-level argument for the first bound of \ref{theorem:LOpoly1} is as follows. If the polynomial is sufficiently \textit{regular}, we apply the \textit{anti-concentration} property of regular polynomials; the latter property in turn follows from the invariance principle and a similar anti-concentration property for polynomials with respect to the Gaussian distribution. 

To complete the argument, we use the regularity lemma  which shows that any polynomial can be written as a small-depth decision tree where most leaves are labeled by polynomials which are either (1) Regular or (2) Polynomials which are fixed in sign with high probability over a uniformly random input. In the first case, you get a regular polynomial of high rank (as the tree is shallow) and we apply the previous argument. In the second case, we argue directly that the probability of taking the value $0$ is small. 

To prove the second bound of \ref{theorem:LOpoly1}, we follow the same conceptual approach but adopt a more careful analysis following the work of Kane \cite{kane2014correct}. We defer the details to the actual proof. 

\subsection{First bound}

	Without loss of generality, we can assume that $I$ is centered at 0 and $r$ is larger than some constant. We can also assume that $d\le \frac{2\log r}{\log \log r}$ because otherwise $dr^{-1/(4d+1)}\ge 1$ and the desired bound becomes trivial.

	Let $\tau \in (0, \frac{1}{3})$ and let $\beta = \frac{1}{r}$. We will use  Proposition \ref{regularity lemma} to reduce to the regular case. Let $\alpha$, $\tau'$ be as in that Proposition, i.e., $\alpha = C(d\log\log \frac{1}{\beta}+d\log d)$ and $\tau' = (C'd\log d\log\frac{1}{\tau})^{d}\tau$. Let $M = \lfloor\frac{r\tau}{2\alpha}\rfloor$. Call a leaf of the decision tree \emph{good} if $P_\rho$ is either $\tau$-regular or $(C,\beta)$-tight and \emph{bad} otherwise. Now, following our decision tree, we have
	\begin{eqnarray}
	\P(P\in I) &\le& \P(\text{reaching a bad leaf} ) + \sum_{\rho\text{ is a good leaf} }\P(\text{reaching } \rho \text{ and } P_{\rho} \in I)\nonumber\\
	&\le& (1-\frac{1}{2C^{d}})^{M} + \sum_{\rho\text{ is a good leaf} }\P(\text{reaching } \rho \text{ and } P_{\rho} \in I)\nonumber\\
	&\le&  2\exp\left (-\frac{r\tau}{4\alpha C^d}\right )+ \sum_{\rho\text{ is a good leaf} }\P(\text{reaching } \rho \text{ and } P_{\rho} \in I)\label{mon1}.
	\end{eqnarray}
	
Now, for each good leaf $\rho$, $P_\rho$ is either $(C,\beta)$-tight or $\tau'$-regular. Let $S$ be the set of indices $i$ of the internal nodes $\xi_i$ that lead to $\rho$. In other words, $\rho$ fixes $S$. Since the depth of the decision tree is at most $M\frac{\alpha}{\tau}\le \frac{r}{2}$, one has $|S|\le \frac{r}{2}$ and so  $q_\rho$ contains at least $r/2$ monomials of degree $d$ each, with mutually disjoint sets of random variables, and with coefficients at least 1 in magnitude. Therefore, $\Var_{(\xi_i) _{ i\notin S}} (P_\rho)=\Var_{(\xi_i)_{ i\notin S}}  (q_\rho)\ge r/2$.  
	
	Assume $P_\rho$ is $(C,\beta)$-tight, then by \eqref{q1}, one has $|P^*(\rho)|=\Omega(\sqrt r)\ge 2$. This together with \eqref{qp*1} give 
\begin{eqnarray}
\P(\text{reaching } \rho \text{ and } P_{\rho} \in I) &=& \P_{\xi_i, i\in S}(\text{reaching } \rho)\P_{\xi_i, i\notin S}(P_\rho \in I)\nonumber\\
&\le& \P_{\xi_i, i\in S}(\text{reaching } \rho)\P_{\xi_i, i\notin S}(|q_\rho| \ge |P^*(\rho)|-1>\frac{1}{2}|P^*(\rho)|)\nonumber\\
&\le& \beta \P_{\xi_i, i\in S}(\text{reaching } \rho) = \frac{1}{r} \P_{\xi_i, i\in S}(\text{reaching } \rho).\label{mon2}
\end{eqnarray}
Next, assume that $P_\rho$ is $\tau'$-regular. By Proposition \ref{regular},
\begin{eqnarray}
\P(\text{reaching } \rho \text{ and } P_{\rho} \in I) &=& \P_{\xi_i, i\in S}(\text{reaching } \rho)\P_{\xi_i, i\notin S}(P_\rho \in I)\nonumber\\
&\le& \P_{\xi_i, i\in S}(\text{reaching } \rho) \left (\frac{Cd}{r^{1/2d}} +Cd\tau'^{1/(4d+1)} \right ) \nonumber\\
&\le& \P_{\xi_i, i\in S}(\text{reaching } \rho) \left (\frac{Cd}{r^{1/2d}} +  C'd^{4/3}\tau^{1/(4d+1)}\left(\log \frac{1}{\tau}\right)^{1/4}\right )\label{mon3}
\end{eqnarray}

Since the events that the root $P$ reaches different leaves on the tree are disjoint, from \eqref{mon1}, \eqref{mon2}, and \eqref{mon3}, we get that for any $0<\tau<\frac{1}{3}$,
\begin{eqnarray}
\P(P \in I) &\le& 2\exp\left (-\frac{r\tau}{4C^{d+1}(d\log\log r+d\log d) }\right )+\frac{Cd}{r^{1/2d}}+ C'd^{4/3}\tau^{1/(4d+1)}\left(\log \frac{1}{\tau}\right)^{1/4}+ \frac{1}{r}.\label{tau1}
\end{eqnarray}

Set $\tau = \frac{8C^{d+1}\log r(d\log\log r+d\log d) }{r}$ then $\tau<\frac{1}{3}$ because we assumed that $d\le \frac{2\log r}{\log\log r}$. The first term on the right of \eqref{tau1} becomes $2r^{-2}$ and the third term is bounded from above by 
$B\frac{d^{4/3}\log ^{1/2}r}{r^{1/(4d+1)}}$. This completes the proof of the first bound.

\subsection{Second bound}
We next build on the arguments in the previous section to prove the second bound in Theorem \ref{theorem:LOpoly1}.

The main ingredient in proving the second bound is the following technical lemma of \cite{kane2014pseudorandom} which says that a random restriction of a sufficiently regular polynomial will likely have a much larger expectation compared to its standard-deviation. This is useful because polynomials with large expectation relative to standard-deviation have small probability of vanishing by tail bounds such as Theorem \ref{thm6}. In case the tail bound does not give a sufficiently good bound, we recurse on the new restricted polynomial. To state the lemma we need the following definition: 
For $\gamma \geq 0$, call a polynomial $P:\R^n \to \R$ $\gamma$-\textit{spread} if $\Var(P(\xi_1, \dots, \xi_n))^{1/2} \geq |\E(P(\xi_1, \dots, \xi_n))|/\gamma$.

\begin{proposition}\label{lm:goodblocks}
Let $b, n$ be such that $b | n$. Let $P:\R^n \to \R$ be a non-constant $\tau$-regular degree $d$ polynomial. Let $S_1,\ldots,S_b$ be a partition of $[n]$ into equal-sized blocks. For $\ell \in [b]$, and an assignment $\xi^l \in \{1, -1\}^{[n] \setminus S_\ell}$ to the variables not in $S_\ell$, let $P_{\xi^\ell}:\R^{S_\ell} \to \R$ denote the polynomial obtained by fixing the variables not in $S_\ell$ to $\xi^l$. Then,
$$\sum_{l=1}^b \P_{\xi^l}(P_{\xi^l} \text{ is $\gamma$-spread}) \leq 2^{O(d)} \cdot (\gamma^2 + 1) \cdot \left (\sqrt{b} + b \tau^{1/8d}\right), $$
where for clarity, the assignments $\xi^{l}$ for different $l$ are independent.

In particular, there exists an index $l \in [b]$, such that 
$$\P_{\xi^l}(P_{\xi^l} \text{ is $\gamma$-spread}) \leq 2^{O(d)} \cdot (\gamma^2+ 1) \cdot \left (1/\sqrt{b} +  \tau^{1/8d}\right ).$$
\end{proposition}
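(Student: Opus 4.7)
The plan is to analyze, for each block $\ell$, the decomposition of $P$ into its mean and variance over $x^{S_\ell}$. Write
\[
E_\ell(\xi^\ell) := \E_{x^{S_\ell}}[P(x^{S_\ell}, \xi^\ell)] = \sum_{T:\, T \cap S_\ell = \emptyset} a_T \chi_T(\xi^\ell),
\]
a polynomial in $\xi^\ell$ of degree at most $d$, and
\[
V_\ell(\xi^\ell) := \Var_{x^{S_\ell}}[P(x^{S_\ell}, \xi^\ell)] = \sum_{\emptyset \neq U \subseteq S_\ell} R_U(\xi^\ell)^2,
\]
where $R_U(\xi^\ell) := \sum_{T:\, T \cap S_\ell = U} a_T \chi_{T \setminus S_\ell}(\xi^\ell)$, a non-negative polynomial of degree at most $2d$. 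The event that $P_{\xi^\ell}$ is $\gamma$-spread is precisely $\{\gamma^2 V_\ell \geq E_\ell^2\}$. Normalize $\Var(P)=1$ and, for simplicity, assume $a_\emptyset=0$. Then $\E_{\xi^\ell}[V_\ell]=\sigma_\ell^2:=\sum_{T \cap S_\ell\neq\emptyset} a_T^2$, and since each $T$ with $|T|\leq d$ meets at most $d$ blocks, one has the global budget
\[
\sum_{\ell=1}^b \sigma_\ell^2 \;=\; \sum_T a_T^2\,|\{\ell: T\cap S_\ell\neq\emptyset\}| \;\leq\; d\,\Var(P) \;=\; d.
\]
Moreover, $E_\ell$ inherits $\tau$-regularity from $P$ since conditioning on coordinates only decreases per-coordinate influences.

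For each block, I would employ a threshold split with parameter $c_\ell>0$,
\[
\Pr_{\xi^\ell}\bigl(\gamma^2 V_\ell \geq E_\ell^2\bigr) \;\leq\; \Pr_{\xi^\ell}\bigl(|E_\ell|\leq c_\ell\bigr) \;+\; \Pr_{\xi^\ell}\bigl(V_\ell \geq c_\ell^2/\gamma^2\bigr).
\]
For the first term, apply Proposition \ref{regular} to the regular degree-$d$ polynomial $E_\ell$ to get $\Pr(|E_\ell|\leq c_\ell) \leq Cd\,c_\ell^{1/d}\Var(E_\ell)^{-1/(2d)}+Cd\,\tau^{1/(4d+1)}$. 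For the second term, apply Bonami--Beckner hypercontractivity to each $R_U$, yielding $\E[R_U^4]\leq 9^d(\E[R_U^2])^2$, then Cauchy--Schwarz on $\E[V_\ell^2]=\sum_{U,U'}\E[R_U^2 R_{U'}^2]$ to obtain the second-moment control $\E[V_\ell^2]\leq 9^d\sigma_\ell^4$. Chebyshev then gives $\Pr(V_\ell\geq c_\ell^2/\gamma^2)\leq 9^d\gamma^4\sigma_\ell^4/c_\ell^4$.

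Choose $c_\ell$ block-by-block to balance the two contributions, then sum over $\ell$. H\"older's inequality applied to the per-block bound, together with the global budget $\sum_\ell \sigma_\ell^2\leq d$, is what extracts the announced $\sqrt{b}$ scaling; the $b\,\tau^{1/8d}$ term comes from summing the MOO invariance-principle error $\tau^{1/(4d+1)}\leq \tau^{1/(8d)}$ over the $b$ blocks. The prefactor $2^{O(d)}(\gamma^2+1)$ absorbs the $9^d$ from hypercontractivity and the $\gamma^2$ introduced by Chebyshev. The ``in particular'' conclusion is then immediate by averaging the summed bound over the $b$ blocks.

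The main obstacle I expect is obtaining the sharp $\sqrt{b}$ rather than a weaker $b^{1-c/d}$ rate: an argument using only Markov's inequality (first-moment control) on $V_\ell$ together with direct Carbery--Wright on $E_\ell$ gives the weaker rate, so it is essential both to use the second-moment bound $\E[V_\ell^2]\leq 9^d \sigma_\ell^4$ via hypercontractivity and to choose $c_\ell$ depending on $\sigma_\ell$ so that the H\"older summation reduces the block sum through the single square-root factor coming from $\sum_\ell \sigma_\ell^2 \leq d$. A smaller technical point is verifying that the $\tau$-regularity of $P$ transfers to $E_\ell$, which is standard since fixing coordinates preserves or reduces normalized influences.
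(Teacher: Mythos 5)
Your threshold-splitting plan is a natural one, and you correctly sense that the $\sqrt{b}$ scaling is the dangerous step. Unfortunately, the planned rescue (second-moment control of $V_\ell$ via hypercontractivity, plus a $\sigma_\ell$-dependent choice of $c_\ell$ and a H\"older summation) does not close the gap, and the paper takes a genuinely different route. Here is the precise issue. Splitting $\Pr(\gamma^2 V_\ell\ge E_\ell^2)\le\Pr(|E_\ell|\le c_\ell)+\Pr(V_\ell\ge c_\ell^2/\gamma^2)$ and applying Carbery--Wright to $E_\ell$ gives a factor $c_\ell^{1/d}$ from the first term, while any moment bound (Chebyshev, or the stronger Theorem~\ref{thm6} tail) controls the second term in terms of $(\sigma_\ell^2\gamma^2/c_\ell^2)^q$. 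Optimizing over $c_\ell$ and summing against the budget $\sum_\ell\sigma_\ell^2\le d$ (via H\"older) yields, after the smoke clears, something on the order of $\sum_\ell(\gamma\sigma_\ell)^{1/d}\lesssim b^{1-1/(2d)}(\gamma\sqrt d)^{1/d}$, i.e.\ an exponent $1-1/(2d)$ on $b$, not $1/2$. This matches $\sqrt b$ only at $d=1$; for every $d\ge 2$ it is strictly worse, and no choice of moment order for $V_\ell$ removes the loss, because the bottleneck is the $c^{1/d}$ decay from Carbery--Wright, precisely the same rate that gives the $r^{-1/(4d+1)}$ in the paper's \emph{first} bound. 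Note also that in the regime where the Proposition is actually deployed, $\gamma=(C\log r)^{d/2}$ is polylogarithmic in $b\approx r^{1/4d}$, so the $\gamma$-prefactor cannot absorb a polynomial-in-$b$ shortfall either.

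What the paper does instead is import Kane's $\alpha$-functional machinery from \cite{kane2014correct}: define $\alpha(Q)=\E_{\zeta,\xi}\min\bigl(1,|D_\zeta Q(\xi)|^2/|Q(\xi)|^2\bigr)$. Lemma~\ref{lm:alphaspread} (Kane, Lemma~21) shows that if $Q$ is $\gamma$-spread then $\alpha(Q)\ge 2^{-O(d)}/(\gamma^2+1)$, and the substantial input, Lemma~\ref{lm:alphabound} (Kane, Proposition~19, Eq.~(4)), gives $\sum_\ell\E_{\xi^\ell}\alpha(P_{\xi^\ell})=O(d^3\alpha(P)\sqrt b+d^4 b\tau^{1/8d})$. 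Markov plus $\alpha(P)\le 1$ then finish in two lines. The reason this beats your decomposition is that $\alpha$ \emph{couples} the events ``$|P_{\xi^\ell}|$ small'' and ``derivative/variance large'': near a typical zero of a regular polynomial the gradient is already large, a simultaneous anti-concentration phenomenon invisible to a union bound in which $\{|E_\ell|\le c\}$ and $\{V_\ell\ge t\}$ are bounded independently. If you want to repair your write-up along the paper's lines, the step to replace is the union bound on $\{\gamma^2 V_\ell\ge E_\ell^2\}$: you need the $\alpha$-functional and Kane's Equation~(4), or an equivalent simultaneous anti-concentration statement, to convert control of $V_\ell$'s \emph{mean} directly into the $\sqrt b$-rate bound on $\sum_\ell\Pr(\gamma\text{-spread})$, without passing through Carbery--Wright once per block.
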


For the proof, we need the following definitions from \cite{kane2014correct}:
\begin{itemize}
\item For a function $f:\R^n \to \R$ and a vector $v \in \R^n$, $D_v f(x) = v \cdot \nabla f(x)$. 
\item Let $\zeta = (\zeta_1,\ldots,\zeta_n)$ and $\xi = (\xi_1,\ldots,\xi_n)$ be independent collections of Rademacher random variables. For a polynomial $P:\R^n \to \R$, define 
$$\alpha(P) = \E_{\zeta, \xi}\left(\min\left(1, \frac{|D_{\zeta} P(\xi)|^2}{|P(\xi)|^2}\right)\right).$$
\end{itemize}
The following claims are implicit in \cite{kane2014correct}.
\begin{lemma}\label{lm:alphaspread}
For any polynomial $P:\R^n \to \R$, $\Var(P) \leq 2^{O(d)} (\E(P)^2 + \Var(P)) \cdot \alpha(P)$. 
\end{lemma}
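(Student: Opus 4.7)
The plan is to derive a lower bound of the form $\alpha(P) \geq 2^{-O(d)} \Var(P)/(\Var(P) + \E(P)^2)$, which rearranges immediately into the claimed inequality. The mechanism is to linearize the truncated ratio $\min(1, |D_\zeta P|^2/|P|^2)$ via Cauchy--Schwarz, reducing matters to comparing $\E|D_\zeta P|$ with $\E\bigl[(D_\zeta P)^2 + P^2\bigr]$, and then to invoke hypercontractivity to pass from the $L_1$ norm of $D_\zeta P$ back to an $L_2$ norm.

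The first step is the elementary pointwise inequality $\min(1, a/b) \geq a/(a+b)$ for $a, b > 0$, which gives
$$\alpha(P) \geq \E_{\zeta, \xi}\!\left[\frac{|D_\zeta P(\xi)|^2}{|D_\zeta P(\xi)|^2 + |P(\xi)|^2}\right].$$
The second step is Cauchy--Schwarz applied to the factorization $|D_\zeta P| = \bigl[|D_\zeta P|^2/(|D_\zeta P|^2 + P^2)\bigr]^{1/2} \cdot \bigl[|D_\zeta P|^2 + P^2\bigr]^{1/2}$, which yields
$$\bigl(\E|D_\zeta P(\xi)|\bigr)^2 \leq \alpha(P) \cdot \E\bigl[|D_\zeta P(\xi)|^2 + P(\xi)^2\bigr].$$

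The third step is to estimate both sides. Expanding $D_\zeta P = \sum_i \zeta_i \, \partial_i P$, using independence of the Rademachers $\zeta_i$, and invoking the standard Fourier identity $\E_\xi[(\partial_i P)^2] = \Inf_i(P)$ on the hypercube, I get $\E[(D_\zeta P)^2] = \sum_i \Inf_i(P)$, which by \eqref{total_inf} lies between $\Var(P)$ and $d\,\Var(P)$. Combined with $\E[P^2] = \Var(P) + \E(P)^2$, this bounds the right-hand side above by $(d+1)(\Var(P) + \E(P)^2)$. For the left-hand side, note that $D_\zeta P$ is a multilinear polynomial of degree at most $d$ in the $2n$ independent Rademacher variables $(\zeta, \xi)$; the Bonami--Beckner hypercontractive inequality therefore implies $\|D_\zeta P\|_2 \leq 3^d \|D_\zeta P\|_1$ (by standard interpolation from $\|Q\|_4 \leq 3^{d/2} \|Q\|_2$), and hence $(\E|D_\zeta P|)^2 \geq 9^{-d} \E[(D_\zeta P)^2] \geq 9^{-d} \Var(P)$.

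Assembling these estimates yields $\alpha(P) \geq 2^{-O(d)} \Var(P)/(\Var(P) + \E(P)^2)$, which is the claim. The only substantive analytic ingredient is hypercontractivity; everything else reduces to an elementary pointwise inequality, one Cauchy--Schwarz, and the routine Fourier identity for influences. I expect the main conceptual step to be the $\min(1, t) \geq t/(1+t)$ trick paired with Cauchy--Schwarz, which is exactly what converts the problem into the clean $L_1$-versus-$L_2$ comparison where hypercontractivity becomes the right tool.
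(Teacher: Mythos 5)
Your argument is correct, and it is a valid self-contained proof of the inequality. The paper itself does not give a proof here---it delegates directly to Lemma~21 of Kane's paper \cite{kane2014correct}---so there is no in-paper proof to compare against line by line; but your route (the pointwise bound $\min(1,t)\ge t/(1+t)$, Cauchy--Schwarz on $|D_\zeta P|=[|D_\zeta P|^2/(|D_\zeta P|^2+P^2)]^{1/2}\cdot[|D_\zeta P|^2+P^2]^{1/2}$, the influence identity $\E_\xi[(\partial_i P)^2]=\Inf_i(P)$ combined with \eqref{total_inf}, and the $L_1$--$L_2$ hypercontractive comparison $\|Q\|_2\le 3^d\|Q\|_1$ for degree-$d$ multilinear $Q$) is essentially the same chain of ideas as the one used in Kane's argument.

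Two small points worth being explicit about, though neither breaks the proof. First, the derivative $D_\zeta P=\sum_i\zeta_i\,\partial_i P(\xi)$ is multilinear of degree at most $d$ in the combined $2n$ Rademacher variables $(\zeta,\xi)$ (linear in $\zeta$, degree $\le d-1$ in $\xi$), which is what justifies applying Bonami--Beckner with constant $3^d$; you should say this, since the degree bookkeeping is the only reason the constant comes out as $2^{O(d)}$ rather than something worse. Second, the lower bound $\E[(D_\zeta P)^2]\ge\Var(P)$ is exactly the left inequality in \eqref{total_inf}, and it is what makes the quantity $(\E|D_\zeta P|)^2\ge 9^{-d}\Var(P)$ nontrivial; when $P$ is constant both sides collapse to $0$ and the lemma is vacuous, which is consistent. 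With those remarks in place, assembling $9^{-d}\Var(P)\le(\E|D_\zeta P|)^2\le\alpha(P)\cdot(d+1)(\Var(P)+\E(P)^2)$ and rearranging gives precisely the claimed bound.
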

\begin{proof}
The claim is proved in \cite[Lemma 21]{kane2014correct}.
\end{proof}
\begin{lemma}\label{lm:alphabound}
Let $b, n$ be such that $b | n$. Let $P:\R^n \to \R$ be a non-constant $\tau$-regular degree $d$ polynomial. Let $S_1,\ldots,S_b$ be a partition of $[n]$ into equal-sized blocks. For $\ell \in [b]$, and an assignment $\xi^l \in \{1, -1\}^{[n] \setminus S_\ell}$ to the variables not in $S_\ell$, let $P_{\xi^\ell}:\R^{S_\ell} \to \R$ denote the polynomial obtained by fixing the variables not in $S_\ell$ to $\xi^l$. Then,
\begin{equation}
\sum_{\ell=1}^b \E_{\xi^\ell}(\alpha(P_{\xi^\ell})) = O(d^3 \alpha(P) \sqrt{b} + d^4 b \tau^{1/(8d)}),\label{kan}
\end{equation}
where for clarity, the assignments $\xi^{l}$ for different $l$ are independent.
\end{lemma}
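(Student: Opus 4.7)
The plan is to unwrap the left-hand side as a single expectation over two independent Rademacher vectors on $[n]$, exploit the additive decomposition $D_\zeta P = \sum_\ell D_\ell$ together with a randomized sign-flip symmetrization, extract the $\sqrt{b}$ factor by Cauchy--Schwarz, and use the $\tau$-regularity of $P$ (through hypercontractivity, Theorem \ref{thm6}) to compare the result against $\alpha(P)$ modulo a small tail error.

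First I would perform the algebraic setup. Let $\xi,\zeta\in\{\pm 1\}^n$ be independent Rademacher vectors and, for each block $\ell$, let $\tilde\zeta^\ell\in\R^n$ agree with $\zeta$ on $S_\ell$ and vanish off $S_\ell$; set $D_\ell := D_{\tilde\zeta^\ell}P(\xi)=\sum_{i\in S_\ell}\zeta_i\,\partial_i P(\xi)$. Unwrapping the definition of $\alpha(P_{\xi^\ell})$ and combining the outer $\xi^\ell$-expectation with the inner Rademacher variables on $S_\ell$ into a single Rademacher $\xi$ on $[n]$ gives
\[
\sum_{\ell=1}^b\E_{\xi^\ell}[\alpha(P_{\xi^\ell})] \;=\; \E_{\xi,\zeta}\!\left[\sum_{\ell=1}^b\min\!\left(1,\frac{D_\ell^2}{P(\xi)^2}\right)\right].
\]
Since $D_\zeta P = \sum_\ell D_\ell$ and $\E_\epsilon[(\sum_\ell\epsilon_\ell D_\ell)^2]=\sum_\ell D_\ell^2$ for i.i.d.\ signs $\epsilon_\ell\in\{\pm 1\}$, and since $\zeta_\epsilon := (\epsilon_\ell \zeta^\ell)_\ell$ has the same law as $\zeta$, we obtain the second-moment identity
\[
\E_{\xi,\zeta}\!\left[\frac{\sum_\ell D_\ell^2}{P(\xi)^2}\right] \;=\; \E_{\xi,\zeta}\!\left[\frac{(D_\zeta P)^2}{P(\xi)^2}\right].
\]

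Second, a pointwise Cauchy--Schwarz step combined with $\min(1,x)\le\sqrt{x}$ converts $\sum_\ell\min(1,D_\ell^2/P^2)$ into at most $\sqrt{b}\cdot(\sum_\ell D_\ell^2)^{1/2}/|P|$. On a ``typical'' event where $|P|$ is not anomalously small relative to $\sqrt{\Var(P)}$ and $|D_\zeta P|$ is not anomalously large relative to its $L^2$-norm, this can be paired with the second-moment identity above and with the event defining $\alpha(P)$ to produce a main contribution of order $d^3\alpha(P)\sqrt{b}$, the $d^3$ arising from the hypercontractive norm equivalences between $L^2$ and $L^q$ for degree-$d$ polynomials. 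The translation between $|D_\zeta P|$ and its block-restricted versions $|D_\ell|$ uses Khintchine-type concentration applied block by block.

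The main obstacle---and the place where $\tau$-regularity is decisive---is controlling the contribution of the bad event on which the above typicality fails. For this one applies the degree-$d$ hypercontractive tail bound (Theorem \ref{thm6}) both to $P$ and to the auxiliary degree-$d$ polynomial $D_\zeta P$ in $\xi$ for fixed $\zeta$; $\tau$-regularity of $P$ ensures that these tails decay as $\tau^{\Omega(1/d)}$ rather than being dominated by a few heavy coordinates. Since each of the $b$ summands $\min(1,D_\ell^2/P^2)$ is bounded by $1$, the bad-event contribution is at most $b$ times the bad-event probability, giving the advertised error $d^4 b\,\tau^{1/(8d)}$ after absorbing the $2^{O(d)}$ hypercontractive overhead. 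The delicate part is ensuring that the typical/bad partition is simultaneously compatible with every block restriction, so that the regularity estimates transfer correctly to each $D_\ell$; the algebraic reductions preceding this step are essentially bookkeeping.
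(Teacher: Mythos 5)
The paper's own ``proof'' of this lemma consists of two sentences: a bookkeeping observation that the left side of \eqref{kan} is unchanged if one generates the assignments $\xi^\ell$ by restricting a single Rademacher vector on $[n]$ to the blocks (which follows from linearity of expectation, since each summand depends only on its own $\xi^\ell$), and then a direct citation to \cite[Proposition~19, essentially Eq.~(4)]{kane2014correct}. You instead attempt a from-scratch re-derivation of Kane's estimate, which is a genuinely different — and substantially more ambitious — route than what the paper does.

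Your algebraic setup and symmetrization are fine: the identity $\sum_\ell \E_{\xi^\ell}\alpha(P_{\xi^\ell}) = \E_{\xi,\zeta}\bigl[\sum_\ell \min(1, D_\ell^2/P(\xi)^2)\bigr]$ is correct, and so is the observation that independent sign flips on the blocks preserve the law of $\zeta$ and yield $\E_{\xi,\zeta}\bigl[\sum_\ell D_\ell^2/P^2\bigr] = \E_{\xi,\zeta}\bigl[(D_\zeta P)^2/P^2\bigr]$. The trouble starts at the step that is supposed to produce the main term $d^3\alpha(P)\sqrt{b}$. Your Cauchy--Schwarz chain uses $\min(1,x^2)\le |x|$ to pass to $\sum_\ell \min(1, D_\ell^2/P^2) \le \sqrt{b}\,(\sum_\ell D_\ell^2)^{1/2}/|P|$, and then via Khintchine relates $(\sum_\ell D_\ell^2)^{1/2}$ to $|D_\zeta P|$. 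But this replaces the truncated second moment by an untruncated first moment: the resulting quantity $\E\bigl[\sqrt{b}\,\sqrt{\sum_\ell D_\ell^2}/|P|\bigr]$ behaves like $\sqrt{b\,\alpha(P)}$, not like $d^3\sqrt{b}\,\alpha(P)$. For example, take $P = T + Q$ with a large constant $T$ and a degree-$d$, mean-zero $Q$ of small variance, so that $\alpha(P) \approx \Var(Q)/T^2$ is small; the Cauchy--Schwarz estimate gives $\sqrt{b}\cdot\sqrt{\alpha(P)}$, which exceeds $d^3\sqrt{b}\,\alpha(P) + d^4 b\,\tau^{1/(8d)}$ whenever $\alpha(P)\lesssim d^{-6}$ and $b$ is small (say $b=O(1)$) and $\tau$ is tiny. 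So this estimate, as written, is strictly weaker than the lemma and cannot be absorbed into the error term uniformly in the parameters.

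The sentence ``this can be paired with the second-moment identity above and with the event defining $\alpha(P)$ to produce a main contribution of order $d^3\alpha(P)\sqrt{b}$'' is doing all the real work, and it is not an argument: $\alpha(P)$ is not defined by an event, the second-moment identity controls $\E[\sum_\ell D_\ell^2/P^2]$ (which may be infinite), and once you have passed to the untruncated first moment you cannot recover the linear dependence on $\alpha(P)$ by splitting into a high-probability ``typical'' event and a small ``bad'' event alone. The actual passage in Kane's proof requires a finer case analysis in which the truncations are preserved through the symmetrization and the $\sqrt{b}$ gain is extracted without first discarding the capping. Your two other ingredients — hypercontractivity (Theorem~\ref{thm6}) to turn $\tau$-regularity into tail bounds giving the $\tau^{1/(8d)}$ error after multiplying by $b$, and the Khintchine/sign-flip symmetrization — are the right tools, but the central step that makes $\alpha(P)$ appear linearly is missing.
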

\begin{proof}
Notice that the right-hand side of \eqref{kan} doesn't change if the assignments $\xi^{l}$ are obtained by choosing $n$ random variables $\xi_1, \dots, \xi_n$ and then looking at the $b$ different restrictions $\xi^{l}$.  The lemma is then proved in \cite[Proposition 19]{kane2014correct} (essentially Equation (4)). 
\end{proof}

Combining the above two claims gives us the proposition.
\begin{proof}[Proof of Proposition \ref{lm:goodblocks}]
For any index $\ell \in [b]$, we have
\begin{align*}
\P(P_{\xi^\ell} \text{ is $\gamma$-spread}) &= \P(\gamma^2 \Var(P_{\xi^\ell}) \geq \E(P_{\xi^\ell})^2)\\
&= \P\left(\frac{\Var(P_{\xi^\ell})}{\E(P_{\xi^\ell})^2 + \Var(P_{\xi^\ell})} \geq \frac{1}{\gamma^2 + 1}\right)\\
&\leq \P(\alpha(P_{\xi^\ell}) 2^{O(d)} \geq 1/(\gamma^2 + 1)) \text{ (by Lemma \ref{lm:alphaspread} applied to $P_{\xi^\ell}$)}\\
&\leq 2^{O(d)} \cdot (\gamma^2 + 1) \cdot \E(\alpha(P_{\xi^\ell})) \text{ (by Markov's inequality)}.
\end{align*}
Therefore, by Lemma \ref{lm:alphabound},
\begin{align*}
\sum_{\ell=1}^b \P(P_{\xi^\ell} \text{ is $\gamma$-spread}) &\leq 2^{O(d)} \cdot (\gamma^2 + 1) \cdot \sum_{\ell=1}^b\E(\alpha(P_{\xi^\ell})) \\
&= 2^{O(d)} \cdot (\gamma^2 + 1) \cdot O(d^3 \alpha(P) \sqrt{b} + d^4 b \tau^{1/(8d)})\\
&= 2^{O(d)} \cdot (\gamma^2 + 1) \cdot (\alpha(P) \sqrt{b} + b \tau^{1/8d}).
\end{align*}
The claim now follows as $\alpha(P) \leq 1$ by definition. 
\end{proof}

We are now ready to prove the second bound of Theorem \ref{theorem:LOpoly1}.
Similar to the proof of the first bound, without loss of generality, we can assume that $I = [-1, 1]$, $r$ is sufficiently large, and that $d\le \frac{\sqrt{\log r}}{\log \log r}$. 
Let, 
\begin{equation}
f(r,d) = \max\{\P(P(\xi)\in I): \text{ $P$ degree $d$ polynomial with $\rank(P) \geq r$}\}.
\end{equation}

Let $P$ be a degree $d$ multi-linear polynomial with $\rank(P) = r$ achieving the minimum $f(r,d)$. For fixed parameters $\tau\in (0, 1/3)$ and $\gamma> 2$ to be chosen later, let $\beta = \frac{1}{r}$ and let $\mathcal{T}$ be a decision tree as guaranteed by Proposition \ref{regularity lemma} with $M = \lceil\frac{r\tau}{2\alpha}\rceil$ where $\alpha$ and $\tau'$ are as in that Proposition. Then the depth of the tree is at most $\frac{r}{2}$, and as in the proof of the first bound, 
\begin{equation}\label{eq:mainproof1}
\P(P(\xi) \in I) \leq 2\exp\left (-\frac{r\tau}{4C^{d}\alpha}\right ) + \frac{1}{r} + \P[P_\rho(\xi) \in I | \text{ $P_\rho$ is $\tau'$-regular}].
\end{equation}

Now, consider a leaf $\rho$ so that $Q \equiv P_\rho$ is $\tau'$-regular.  Note that $\rank(Q) \geq r/2$ and in particular $Q$ is non-constant. Fix $b < r/4$, a parameter to be chosen later. Fix a partition $S_1,\ldots,S_b$ of the variables of $Q$ such that for $\ell \in [b]$, the restricted polynomials $Q^\ell$ obtained by fixing the variables not in $S_\ell$ each satisfy $\rank(Q^\ell) \geq \lfloor \rank(Q)/b\rfloor$ (this can be done for instance by first partitioning the variables witnessing $\rank(Q)$). Note that if the number of variables in $Q$ is not divisible by $b$, we only need to add a few variables to $Q$ without affecting its output nor its regularity. Now, by Proposition \ref{lm:goodblocks} applied to the polynomial $Q$, there exists $\ell \in [b]$ such that the polynomial $Q^\ell$ obtained by a random assignment to the variables not in $A^\ell$ is $\gamma$-spread with probability at most 
$$2^{O(d)} \cdot (\gamma^2 + 1) \cdot \left ({1/\sqrt{b} +  \tau'^{1/8d}}\right ).$$

Therefore,
\begin{eqnarray*}
\P(Q(y) \in I) &\leq& 2^{O(d)} \cdot (\gamma^2 + 1) \cdot \left ({1/\sqrt{b} + \tau'^{1/8d}}\right )  \cdot \P( Q^\ell(z) \in I |\text{ $Q^\ell$ is $\gamma$-spread})+ \\ 
&&\P(Q^\ell(z) \in I | \text{ $Q^\ell$ is not $\gamma$-spread}) \\
&\leq& 2^{O(d)} \cdot (\gamma^2 + 1) \cdot \left ({1/\sqrt{b} + \tau'^{1/8d}}\right )  \cdot f(\lfloor \rank(Q)/b\rfloor,d) + \P(Q^\ell(z) \in I | \text{ $Q^\ell$ is not $\gamma$-spread}).
\end{eqnarray*}

Finally, to bound the last term, observe that if $Q^\ell$ is not $\gamma$-spread and not identically zero, then
\begin{eqnarray*}
\P({Q^\ell(z) \in I}) &=& \P(|Q^{\ell}|\le 1) \leq \P({\left|Q^\ell(z) - \E(Q^\ell)\right| \geq |\E(Q^\ell)|-1})\\
&\leq& \P\left ({\left|Q^\ell(z) - \E(Q^\ell)\right| \geq \frac{\gamma \Var(Q^\ell)^{1/2}}{2}}\right )\\
&\leq& 2 \exp\left ({- \Omega(1) \gamma^{2/d}}\right ) \text{ (by Theorem \ref{thm6})},
\end{eqnarray*}
where in the next to last inequality, we use the inequalities $|\E(Q^{\ell})|\ge \gamma. \Var(Q^{\ell})^{1/2}\ge \gamma .\rank (Q^{\ell})^{1/2}\ge \gamma.(r/2b)^{1/2}\ge 2$ and so $|\E (Q^{\ell})|-1\ge \frac{|\E (Q^{\ell})|}{2}\ge \frac{\gamma \Var(Q^{\ell})^{1/2}}{2}$.

Combining the above arguments, we get that if  $b \leq r/4$, 
$$\P({Q(x) \in I}) \leq 2^{O(d)} \cdot (\gamma^2 + 1) \cdot \left ({1/\sqrt{b} + \tau'^{1/8d}}\right ) \cdot f(\lfloor r/b\rfloor,d) + O(1) \exp\left ({- \Omega(1) \gamma^{1/2d}}\right ) .$$
Hence, by \eqref{eq:mainproof1} we have that 
\begin{equation}
\P({P(x) \in I}) \leq 2\exp\left (-\frac{r\tau}{4C^{d}\alpha}\right ) + \frac{1}{r} + 2^{O(d)} \cdot (\gamma^2 + 1) \cdot \left ({1/\sqrt{b} +  \tau'^{1/8d}}\right ) \cdot f(\lfloor r/b\rfloor,d) + O(1) \exp\left ({- \Omega(1) \gamma^{2/d}}\right ).\label{rec}
\end{equation}

Now, as in the proof of the first bound of Theorem \ref{theorem:LOpoly1}, set $\tau = \frac{8C^{d+1}\log r(d\log\log r+d\log d) }{r}$, $b = r^{1/4d}/(d\log r)^{Cd}$, and $\gamma = (C \log r)^{d/2}$. Then, 
$$f(r,d) \leq (C \log r))^{Cd} \cdot f(r^{1-1/4d},d) \cdot r^{-1/8d}.$$
(here we used the fact that $f(r, d)\ge \Omega (r^{-1/2})$ by choosing the polynomial $p(\xi_1, \dots, \xi_{rd})=\xi_1\xi_2\dots\xi_d+\xi_{d+1}\dots\xi_{2d}+\dots + \xi_{rd-d+1}\dots\xi_{rd}$, and so all the other terms on the right-high side of \eqref{rec} are dominated by the term $(C \log r))^{Cd} \cdot f(r^{1-1/4d},d) \cdot r^{-1/8d}$.)

Let $a = 1 - 1/4d$. Applying this recurrence relation $k$ times with $r^{a^{k}} = C$ (so $k  = \Theta(d\log\log r)$), we get
\begin{eqnarray}
f(r,d) &\leq& (C \log r))^{kCd} \left (\prod_{i=0}^{k-1}a^{i}\right )^{Cd}\cdot f(r^{a^{k}},d) \cdot r^{-(\sum_{i=0}^{k-1}a^{i})/8d}\nonumber\\
&\le& e^{O(d^{2}(\log \log r)^{2})}r^{-(1-a^{k})/2} = Ce^{O(d^{2}(\log \log r)^{2})}r^{-1/2},\nonumber
\end{eqnarray}
completing the proof of the second bound and hence Theorem \ref{theorem:LOpoly1}.


\section{General distributions}\label{gen-proof}
\subsection{Proof of Theorem \ref{th:littlewoodoffordbiasedmain}}

We reduce the $p$-biased case to the uniform distribution at the expense of a loss in the rank of the polynomial and then apply Theorem \ref{theorem:LOpoly1}. 

First notice that if $x\sim \mu_p$, then $1-x\sim \mu_{1-p}$. And so, by replacing the polynomial $P$ by $Q(x_1, \dots, x_n) = P(1- x_1, \dots, 1-x_n)$, we can exchange the roles of $p$ and $1-p$. Therefore, without loss of generality, we assume that $\alpha = p\le 1/2$. 

Our assumption $2^{d}p^{d}r\ge 3$ guarantees that $\log \log (2^{d}p^{d}r) = \Omega(1)$ and hence by choosing the implicit constants on the right-hand side of Theorem \ref{th:littlewoodoffordbiasedmain} to be sufficiently large, we can assume that $2^{d}p^{d}r$ is greater than 100 (say).

Let $\eta_1, \dots, \eta_n$ and $\xi_1', \dots, \xi_n'$ be independent Bernoulli random variables with $\P(\eta_i=0) = 1/2$ and $\P(\xi_i' = 0) = 1-2p$. Let $\xi_i = \eta_i\xi_i'$ then $\xi_1, \dots, \xi_n$ are iid Bernoulli variables with $\P(\xi_i = 0) = 1-p$. Therefore, we need to bound $\P(P(\xi_1, \dots, \xi_n)\in I)$.

From the definition of $\rank(P)$, there exist disjoint sets $S_1, \dots, S_r$ such that $|a_{S_j}|\ge 1$ for all $j=1, \dots, r$. We have $P(\xi_1, \dots, \xi_n) = \sum_{S\subset [n], |S| \le d} \left (a_{S}\prod_{i\in S}\xi_i'\right )\prod_{i\in S}\eta_i$. Conditioning on the $\xi_i'$'s, $P$  becomes a polynomial of degree $d$ in terms of $\eta_i$ whose coefficients associated with $S_j$ are $b_{S_j}:=a_{S_j}\prod_{i\in S_j}\xi_i'$ accordingly. For each such $j$, one has 
\begin{eqnarray}
\P_{\xi_1', \dots, \xi_n'}(|b_{S_j}|\ge 1)= \P(\xi_i'=1, \forall i\in S_j) = (2p)^{d}.\nonumber
\end{eqnarray}

Now, since the sets $S_j$ are disjoint, the events $|b_{S_j}|\ge 1$ are independent. Define 
 $X = \sum_{j=1, \dots, r}\textbf{1}_{|b_{S_j}|\ge 1}$. By the classical Chernoff's bound we have,  for $0<\gamma<1$, $\P(|X-\E X| \ge \gamma\E X) \le 2e^{-\gamma^{2}\E X/3}$. 
  Thus, we  conclude that with probability at least $1 - \exp(-2^{d-1}p^{d}r/6)$, there are at least $2^{d-1}p^{d}r$ indices $j$ with $|b_j|\ge 1$. Conditioning on this event, we obtain a polynomial of degree $d$ in terms of $\eta_1, \dots, \eta_n$ which has rank at least $2^{d-1}p^{d}r$. The theorem now follows from applying Theorem \ref{theorem:LOpoly1} to this polynomial and noting that the additional error of $\exp(-2^{d-1}p^{d}r/6)$ is smaller than both terms from Theorem \ref{theorem:LOpoly1}. 

\subsection{Proof of Theorem \ref{thm:generalist}}
By replacing $P(x_1, \dots, x_n)$ by $Q(x_1, \dots, x_n) = P(x_1+y_1, \dots, x_n+y_n)$ and $\xi_i$ by $\xi_i-y_i$, we can also assume without loss of generality that $y_i=0$ for all $i$. Furthermore, we can assume that $\P(\xi_i\le 0)=p$ for all $i$. Indeed, if for some $i$, $\P(\xi_i>0)=p$, we replace $\xi_i$ by $-\xi_i$ and modify  the polynomial $P$ accordingly to reduce to the case $\P(\xi_i<0)=p$. And then the proof runs along the same lines as in the case $\P(\xi_i\le 0)=0$. 

For each $i=1, \dots, n$, let $\xi_i^{+}$ and $\xi_i^{-}$ be independent random variables satisfying $\P(\xi_i^{+}\in A) = \P(\xi_i\in A|\xi_i>0)$ and $\P(\xi_i^{-}\in A) = \P(\xi_i\in A|\xi_i\le 0)$ for all measurable subset $A\subset \R$. Let $\eta_1, \dots, \eta_n$ be iid random Bernoulli variables (independent of all previous random variables) such that $\P(\eta_i = 0) = p$.
Let $\xi_i' = \eta_{i}\xi_i^{+} + (1-\eta_{i})\xi_i^{-}$, then $\xi_i'$ and $\xi$ have the same distribution. Therefore, it suffices to bound the probability that $P(\xi_1', \dots, \xi_n')$ belongs to $I$. One has
\begin{equation}
P(\xi_1', \dots, \xi_n') = P(\eta_1(\xi_1^{+}-\xi_1^{-}) + \xi_1^{-}, \dots, \eta_n(\xi_n^{+}-\xi_n^{-}) + \xi_n^{-}) = \sum_{S\subset [n], |S| = d} \left (a_{S}\prod_{i\in S}(\xi_i^{+}-\xi_i^{-})\right )\prod_{i\in S}\eta_i + Q,\nonumber
\end{equation}
where $Q$ is some polynomial which has degree $<d$ in terms of $\eta_i$ when all the $\xi_i^{\pm}$ are fixed. From the definition of $\rank(P)$, let $S_1, \dots, S_r$ be disjoint subsets of $[n]$ with $|a_{S_j}|\ge 1$ for all $1\le j\le r$. Conditioning on the variables $\xi_i^{\pm}$, the polynomial $P$ becomes a polynomial of degree $d$ in terms of $\eta_i$ whose coefficients associated with $S_j$ are $b_{S_j}:=a_{S_j}\prod_{i\in S_j}(\xi_i^{+}-\xi_i^{-})$ accordingly. For each such $j$, one has 
\begin{eqnarray}
\P_{\xi_1^{\pm}, \dots, \xi_n^{\pm}}(|b_{S_j}|\ge 1)\ge \P(\xi_i^{+}-\xi_i^{-}\ge 1, \forall i\in S_j).\nonumber
\end{eqnarray}

Since $\xi_{i}^{+}\ge 0\ge \xi_i^{-}$ a.e., one has $2\P(\xi_i^{+}-\xi_i^{-}\ge 1)\ge \P(|\xi_i^{+}\ge 1) + \P(|\xi_i^{-}\le -1) = \P(|\xi_i|\ge 1)\ge \epsilon$. Hence, 
\begin{equation}
\P_{\xi_1^{\pm}, \dots, \xi_n^{\pm}}(|b_{S_j}|\ge 1)\ge 2^{-d}\epsilon^{d}.\nonumber
\end{equation}

Now, since the sets $S_j$ are disjoint, the events $|b_{S_j}|\ge 1$ are independent. Therefore, using a Chernoff-type bound as in the proof of Theorem \ref{th:littlewoodoffordbiasedmain}, one can conclude that with probability at least $1 - \exp(-2^{-d}\epsilon^{d}r/12)$, there are at least $r2^{-d}\epsilon^{d}/2$ indices $j$ with $|b_j|\ge 1$. Conditioning on this event, we obtain a polynomial of degree $d$ in terms of $\eta_1, \dots, \eta_n$ which has rank at least $r2^{-d}\epsilon^{d}/2$. Using Theorem \ref{th:littlewoodoffordbiasedmain}, one obtains the desired bound.

\section{Proof of Theorem \ref{th:orapprox}} \label{app-proof}
\newcommand{\zero}{\overline{0}}

Let $a$ be an integer to be chosen later. Let $D = \lfloor \log_a (\log_2 n-1) \rfloor$ be the largest integer such that $2^{-a^D} \geq 2/n$. Let $\mu$ be the distribution obtained by the following procedure:
\begin{enumerate}
\item With probability $1/2$ output $x =\zero$ (the all $0$'s vector).
\item With probability $1/2$ pick an index $i \in \{1,\ldots,D\}$ uniformly at random and output $x \sim \mu_{2^{-a^i}}^n$. 
\end{enumerate}
We next show that for some constant $c > 0$, there exists no polynomial $P$ of degree $d < c (\log \log n)/(\log \log \log n))$ such that $\P_{x \sim \mu}(P(x) = OR(x)) \geq 2/3$. Let $P$ be such a polynomial. Then, necessarily, $P(\zero) = 0$; as $\P_{x\sim \mu}(P(x)=0) \le 1/2 + 1/2(1-2^{-a^{D}})^n \leq 1/2 + (1/2) (1-2/n)^n <2/3$, there must exist a set of indices $I \subseteq [D]$ with $|I| \geq \Omega(D)$ such that for all $i \in I$, 
$$\P_{x \sim \mu_{2^{-a^i}}}(P(x) = 1) = \Omega(1).$$

Let $I = \{i_1 < i_2 < \cdots < i_k \}$ and for $\ell \in [k]$, let $p_\ell = 2^{-a^{i_\ell}}$. Now, by Theorem \ref{th:littlewoodoffordbiasedmain} applied to the polynomial $P-1$ and $x \sim \mu_{p_1}^n$, we get that either $\rank(P) \leq (3/2p_1)^d$ or 
$$\Omega(1) = \P(P(x) = 1) \leq O(d^{4/3}) \frac{\log(\rank(P) (2p_1)^d)^{1/2}}{(\rank(P) (2p_1)^d)^{1/(4d+1)}}.$$
Hence, in any case, $\rank(P) \leq r_1 = (d)^{O(d)}/p_1^d$. This in turn implies that there exists a set of $r_1 \cdot d$ indices $S_1\subseteq [n]$ such that the polynomial $P_1 = P_{S_1}$ obtained by assigning the variables in $S_1$ to $0$ is of degree at most $d-1$. Further, for $x \sim \mu_{p_{2}}^{[n]}$, 
\begin{eqnarray*}
\Omega(1) &=& \P_{x}(P(x) = 1) = \P(x_{S_1} = 0) \cdot \P_{x}(P(x) = 1 | x_{S_1} = 0) + \P(x_{S_1} \neq 0) \cdot \P_x(P(x) = 1 | x_{S_1} \neq 0)\\
&\leq& \P_{x \sim \mu_{p_{2}}^{[n] \setminus [S_1]}}(P_1(x) = 1) + \P(x_{S_1} \neq 0)\\
&\leq& \P_{x \sim \mu_{p_{2}}^{[n] \setminus [S_1]}}(P_1(x) = 1) + |S_1| \cdot p_{2}.
\end{eqnarray*}
Thus, 
$$\P_{x \sim \mu_{p_{2}}^{[n] \setminus [S_1]}}(P_1(x) = 1) \geq \Omega(1) - d^{O(d) + 1} \cdot (p_2/p_1^d) = \Omega(1) - d^{O(d)+1} 2^{-a^{i_2} + d a^{i_1}} \geq \Omega(1) - d^{O(d)} 2^{-a^{i_1}},$$
for $a \geq 2d$. Further, note that $P_1(\zero) = 0$.

Iterating the argument with $P_1$ and so forth, we get a sequence of polynomials $P_1, P_{2}, \ldots,P_{k-1}$ such that for $1 \leq j  \leq \min(d,k-1)$, $P_{j}$ is of degree at most $d-j$, $P_{j}(\zero) = 0$ and for $x \sim \mu_{p_{j+1}}^{[n] \setminus (S_1 \cup \cdots \cup S_{j})}$, 
$$\P_x(P_{j}(x) = 1) = \Omega(1) - d^{O(d)+j} 2^{-a}.$$
This clearly leads to a contradiction if $k > d$ and $a \geq C d \log d$ for a large enough constant $C$ (so that the right hand side of the above equation is non-zero for $j = d$).

Therefore, setting $a = C d \log d$, for a sufficiently big constant $C$, we must have $k = \Omega(D) \leq d$. That is, $\log_2 (n-1) = a^{O(d)} = d^{O(d)}$. Thus, we must have $d = \Omega(1) (\log \log n)/(\log \log \log n)$.
\bibliographystyle{siam}
\bibliography{ref}

\end{document}